\documentclass[11pt]{amsart}

\usepackage[utf8]{inputenc}
\usepackage[T1]{fontenc}

\usepackage{amsmath,amssymb,amsfonts,textcomp,amsthm,xifthen,graphicx,color}

\usepackage{enumerate}
\usepackage{enumitem}

\usepackage{fullpage}

\usepackage[utf8]{inputenc}

\usepackage[pdftex,
            pdfauthor={Thomas F\"uhrer and Michael Karkulik},
            pdftitle={Least-squares finite elements for distributed optimal control problems},
            ]{hyperref}

\usepackage{pgfplots}
\usepgfplotslibrary{groupplots}
\pgfplotsset{compat=newest}
\usepgfplotslibrary{external}
\usepgfplotslibrary{colorbrewer}
\tikzexternalize

\newtheorem{theorem}{Theorem}
\newtheorem{lemma}[theorem]{Lemma}
\newtheorem{corollary}[theorem]{Corollary}
\newtheorem{proposition}[theorem]{Proposition}
\newtheorem{remark}[theorem]{Remark}

\newcommand{\II}{\mathcal{I}}

\newcommand{\Amat}{\boldsymbol{A}}

\newcommand{\Imat}{\boldsymbol{I}}

\newcommand{\Xad}{X_\mathrm{ad}}

%

\def\di{\mathrm{d}}

\def\Dev{{\mathbf{Dev}\,}}

\def\enorm#1{|\hspace*{-.5mm}|\hspace*{-.5mm}|#1|\hspace*{-.5mm}|\hspace*{-.5mm}|}


\newcommand{\ip}[2]{(#1\hspace*{.5mm},#2)}
\newcommand{\dual}[2]{\langle#1\hspace*{.5mm},#2\rangle}

\newcommand{\norm}[3][]{#1\|#2#1\|_{#3}}

\def\curl{{\rm curl\,}}
\def\ccurl{{\bf curl\,}}

\def\div{{\rm div\,}}
\def\Div{{\mathbf{div}\,}}

\newcommand{\Hdivset}[1]{\boldsymbol{H}(\div;#1)}
\newcommand{\HDivset}[1]{\underbar\HH(\Div;#1)}
\newcommand{\Hcurlset}[1]{\boldsymbol{H}(\ccurl;#1)}
\newcommand{\HcurlsetZero}[1]{\boldsymbol{H}_0(\ccurl;#1)}

\newcommand{\set}[2]{\big\{#1\,:\,#2\big\}}

\newcommand{\pp}{\boldsymbol{p}}

\newcommand{\RT}{\ensuremath{\mathcal{RT}}}

\newcommand{\R}{\ensuremath{\mathbb{R}}}

\newcommand{\HH}{\ensuremath{{\boldsymbol{H}}}}

\newcommand{\LL}{\ensuremath{\mathcal{L}}}

\newcommand{\vv}{\ensuremath{\boldsymbol{v}}}
\newcommand{\ww}{\ensuremath{\boldsymbol{w}}}
\newcommand{\TT}{\ensuremath{\mathcal{T}}}
\newcommand{\FF}{\ensuremath{\boldsymbol{F}}}

\newcommand{\cS}{\ensuremath{\mathcal{S}}}

\newcommand{\PP}{\ensuremath{\mathcal{P}}}

\newcommand{\OO}{\ensuremath{\mathcal{O}}}

\newcommand{\NN}{\ensuremath{\boldsymbol{N}}}
\newcommand{\zz}{\ensuremath{{\boldsymbol{z}}}}
\newcommand{\normal}{\ensuremath{{\boldsymbol{n}}}}

\renewcommand{\AA}{\ensuremath{\mathcal{A}}}
\newcommand{\BB}{\ensuremath{\mathcal{B}}}

\newcommand{\bb}{\ensuremath{\boldsymbol{b}}}
\renewcommand{\aa}{\ensuremath{\boldsymbol{a}}}
\newcommand{\ff}{\ensuremath{\boldsymbol{f}}}
\newcommand{\CC}{\ensuremath{\mathcal{C}}}

\newcommand{\ran}{\operatorname{ran}}

\newcommand\DD{\mathcal{D}}


\newcommand{\ssigma}{{\boldsymbol\sigma}}
\newcommand{\xxi}{{\boldsymbol\xi}}
\newcommand{\ttau}{{\boldsymbol\tau}}

\newcommand{\qq}{{\boldsymbol{q}}}
\newcommand{\uu}{\boldsymbol{u}}


\newcommand{\MM}{\boldsymbol{M}}

\newcommand{\tr}{\operatorname{tr}}

\newcommand{\xx}{\boldsymbol{x}}

\newcommand{\yy}{\boldsymbol{y}}

\begin{document}

\title{Least-squares finite elements for distributed optimal control problems}
\date{\today}

\author{Thomas F\"uhrer}
\address{Facultad de Matem\'{a}ticas, Pontificia Universidad Cat\'{o}lica de Chile, Santiago, Chile}
\email{tofuhrer@mat.uc.cl}

\author{Michael Karkulik}
\address{Departamento de Matem\'atica, Universidad T\'ecnica Federico Santa Mar\'ia, Valpara\'iso, Chile}
\email{michael.karkulik@usm.cl}

\thanks{{\bf Acknowledgment.} 
This work was supported by ANID through FONDECYT projects and 1210391 (TF), and 1210579 (MK)}

\keywords{least-squares method, optimal control, parabolic PDEs, variational inequality}
\subjclass[2020]{65N30, 
                 65N12, 
                 35F35,
                 65M50,
                 49M41
                 }
\begin{abstract}
  We provide a framework for the numerical approximation of distributed optimal control problems, based on least-squares finite element methods. 
  Our proposed method simultaneously solves the state and adjoint equations and is $\inf$--$\sup$ stable for any choice of conforming discretization spaces. 
  A reliable and efficient a posteriori error estimator is derived for problems where box constraints are imposed on the control. It can be localized and therefore used to steer an adaptive algorithm.
  For unconstrained optimal control problems, i.e., the set of controls being a Hilbert space, we obtain a coercive least-squares method and,
    in particular, quasi-optimality for any choice of discrete approximation space. For constrained problems we derive and analyze a variational inequality where the PDE part is tackled by least-squares finite element methods. 
  We show that the abstract framework can be applied to a wide range of problems, including scalar second-order PDEs, the Stokes problem, and parabolic problems on space-time domains. 
  Numerical examples for some selected problems are presented. 
\end{abstract}
\maketitle

\section{Introduction}
Optimal control problems subject to PDEs form an important class of problems in practice, see, e.g.,~\cite{Lions71} for various applications.
Nowadays, the theory of distributed optimal control problems subject to linear PDEs is well understood, and
the most common approach to solve such problems is by deriving first-order optimality conditions and introducing either the adjoint state or
a Lagrangian multiplier. This approach leads, in general, to a symmetric but indefinite system.
Quite naturally, finite element techniques (FEM) have been used to discretize the resulting formulations, see~\cite[Ch.11]{BochevGunzberger09}.
Standard references on different finite element methods used in optimal control are~\cite{BoffiBrezziFortin,BrennerScott,ErnGuermond}.
In the present work, we particularly consider least-squares finite element methods, which are treated thoroughly
in~\cite{BochevGunzberger09}.
We also refer to~\cite[Ch.11]{BochevGunzberger09} for an overview on references and different techniques to treat optimal control
problems with FEM resp. least-squares FEM (LSFEM).

In the present work we consider a different approach, inspired by the work~\cite{BochevGunzberger06}.
In~\cite[Sec.4]{BochevGunzberger06} the optimality system is written in operator form and, then, a quadratic functional is defined by summing the squared norms of the residuals. 
Minimizing this functional is equivalent to solving the optimal control problem, and minimizing over finite element spaces gives a LSFEM.
The method from~\cite[Sec.4]{BochevGunzberger06} is defined when the control variable is sought in a Hilbert space.
However, when the control is restricted to a convex subset only --- a situation encountered in many practical applications --- there is no simple, i.e., linear,
relation between the adjoint state and the control.
Here, we consider the general case of optimal control problems where the set of admissible controls is a convex closed subset of the Lebesgue space of square-integrable functions.
Starting from the optimality system consisting of the state equation, the adjoint state equation and a variational inequality relating the adjoint state and the control, see~\cite[Ch.~2]{Lions71}, our proposed method is based on a least-squares functional (for the state and adjoint state equation) and a duality term corresponding to the variational inequality of the optimality system.
The whole method results in a variational inequality and we prove that the bilinear form, though not symmetric, is coercive, thus, the discrete systems are invertible for any choice of discretization space. 
The proposed method can also be interpreted as a Nitsche-type coupling of a LSFEM and a Galerkin method.
Nitsche-type methods are defined by adding terms to a bilinear form to include, e.g., essential boundary conditions in a weak sense, see, e.g.,~\cite{Stenberg95}.
In the past, we have used such a coupling technique for other minimum residual methods like the discontinuous Petrov--Galerkin method with optimal test functions (DPG) to define a coupling to boundary element methods~\cite{DPGBEM} or for the analysis of the DPG method for Signorini problems~\cite{DPGSignorini}.
If the set of admissible controls is a closed subspace, then our method simplifies to a LSFEM method that is essentially a reduced version of the method
from~\cite[Sec.4]{BochevGunzberger06} (we eliminated the optimality condition from~\cite[Eq.(4.1)]{BochevGunzberger06}).

In this article, we restrict the presentation to a framework where we consider LSFEMs which minimize first-order system residuals in $L^2$ norms only.
The practical implications of this framework are that standard finite element spaces (continuous and piecewise polynomial) are conforming, and that
the system matrices are easily computable up to numerical quadrature. A disadvantage is an increased number of degrees of freedom due
to the introduction of a new variable.
We show that the framework is quite general in the sense that it is applicable to a wide range of problems, including distributed optimal control problems
subject to reaction--diffusion--convection problems, the Stokes problem, the heat equation and many more.
The heat equation, in particular, will be formulated in space-time following methods developed in the recent works~\cite{FuehrerK_21,GS21}.
The advantage of this approach is that it is robust in standard energy norms even on locally refined space-time
meshes and that standard finite element spaces are conforming.
These space-time discretizations have been used recently for the optimal control of parabolic problems in~\cite{OptControlHeat,OptControlHeatGS22}.
While the latter two works use a Lagrangian multiplier in order to obtain a complete optimality system, in the present work we directly include
the adjoint state equation. An advantage of this approach is that we immediately obtain an approximation of the adjoint
state without any post-processing. We also stress that robustness of the employed space-time discretization allows us to use final-time desired states,
cf.~\cite{OptControlHeat}.
On the downside, using first-order formulations for the state and the adjoint-state equations implies an increased number of degrees of freedom. 
In general, this is also a critical point in space-time discretizations. However, in transient problems such as optimal control for parabolic equations, the entire history
of the state needs to be stored anyway, even if time-stepping methods are employed.
For unconstrained optimal control problems our proposed method yields a symmetric and positive definite system of equations, whereas the approaches from~\cite{OptControlHeat,OptControlHeatGS22} yield symmetric saddle-point systems. 
Furthermore, in the present work the control is eliminated (for unconstrained problems) which leads to slightly smaller dimensions of the discrete spaces. 
Regarding convergence we note that all three methods use the same Hilbert spaces defined over the space-time cylinder and are quasi-optimal with respect to canonical norms.
For constrained optimal control problems our proposed method requires us to choose sufficiently large parameters to ensure invertibility where the optimal choice of these parameters is not explicitly known. 
For unconstrained problems, however, our proposed method as well as the works~\cite{OptControlHeat,OptControlHeatGS22} do not require sufficiently large parameters.
Other works on space-time methods for the optimal control of parabolic problems include~\cite{GongHinzeZhou12,GunzburgerK_11,LangerSTY21,LangerSTY_21_a,MeidnerV_07}.
These works use different space-time discretization methods such as residual minimization for second order PDEs, wavelet methods,
mixed finite element methods, or discontinuous Galerkin methods, which do not share all the advantages as the space-time discretization employed in the present work.

Furthermore, we derive an a posteriori error estimator that is fully localizable as well as reliable and efficient in the case that box constraints are imposed on the control.
The estimator consists of the norms of the residuals of the state and adjoint state equations plus the error between the discrete computed control and the projection of a similar quantity defined by the discrete adjoint state.
In contrast to the estimator from~\cite{OptControlHeat} the a posteriori error estimator from the work at hand does not rely on solving an auxiliary problem.

The remainder of this article is organized as follows: In Section~\ref{sec:main} we introduce the underlying assumptions of the LSFEMs under consideration, derive the optimality system in the notation of LSFEMs and define a general LSFEM based method to solve distributed optimal control problems. 
A well-posedness analysis and quasi-best approximation estimates are given together with the derivation of reliable and efficient error estimators. 
Section~\ref{sec:examples} gives various examples where the abstract framework is applicable and Section~\ref{sec:num} presents numerical experiments for some selected problems.

\section{LSFEM based framework}\label{sec:main}
In this section we present a framework using least-squares finite elements for optimal control problems.
In subsection~\ref{sec:optcont} we describe the optimal control problem and assumptions tailored to the use of LSFEMs.
A LSFEM based method for the constrained optimal control problem is defined in subsection~\ref{sec:lsfem:constrained} and a priori as well as a posteriori analysis is provided. 
For unconstrained optimal control problems we show in subsection~\ref{sec:lsfem:unconstrained} that our proposed method can be simplified to a pure LSFEM problem.

\subsection{Notation}
For a Lipschitz domain $\Omega\subset \R^d$, $d\geq 1$ we use common notations for Lebesgue and Sobolev spaces, e.g., $L^2(\Omega;\R)$, $L^2(\Omega;\R^d)$, $L^2(\Omega;\R^{d\times d})$, $H^1(\Omega)$, $H_0^1(\Omega)$. 
We use $\ip\cdot\cdot_{L^2(\Omega)}$ to denote the canonical inner product in $L^2(\Omega;\R)$, $L^2(\Omega;\R^d)$ or $L^2(\Omega;\R^{d\times d})$. The induced norm is denoted by $\norm\cdot{L^2(\Omega)}$.
Throughout, we identify Lebesgue-like spaces with their duals. 
The Sobolev space $H_0^1(\Omega)$ is equipped with the norm $\norm{\nabla(\cdot)}{L^2(\Omega)}$. We also need 
\begin{align*}
  \Hdivset\Omega = \set{\ssigma\in L^2(\Omega;\R^d)}{\div\ssigma\in L^2(\Omega;\R)}
\end{align*}
with norm $\norm{\ssigma}{\Hdivset\Omega}^2 = \norm{\ssigma}{L^2(\Omega)}^2 + \norm{\div\ssigma}{L^2(\Omega)}^2$. 
For a generic Hilbert space $H$ we denote by $\ip{\cdot}{\cdot}_H$ its inner product and by $\norm\cdot{H}$ the induced norm.

If $X_1,\dots,X_n$ are Hilbert spaces, then we equip the product space $X=X_1\times\dots\times X_n$ with the norm
\begin{align*}
  \norm{(x_1,\dots,x_n)}{X}^2 = \norm{x_1}{X_1}^2 + \dots + \norm{x_n}{X_n}^2. 
\end{align*}

For the functional analytic setting of the heat equation we use Bochner spaces. Let $X$ be a Hilbert space and $I=(0,T)$ a time interval. 
The elements of the Bochner space $L^2(I;X)$ are functions $x\colon I\to X$ which are strongly measurable with respect to the Lebesgue measure $\di t$ with
\begin{align*}
  \int_I \norm{x(t)}X^2 \,\di t < \infty. 
\end{align*}
Similarly, the Bochner space $H^1(I;X)$ is defined as the space of functions $x\in L^2(X)$ such that $\partial_t x \in L^2(X)$, where $\partial_t(\cdot)$ denotes the weak derivative with respect to time. 
We refer to, e.g.,~\cite[Sec.5.9.2]{Evans98}, for a short introduction to Bochner spaces.

\subsection{Optimal control problem}\label{sec:optcont}
In this work we consider the following optimal control problem. 
Let $\ff\in Z$ be given and let the \emph{desired state} $\zz_d\in H$ be given. 
With the \emph{cost parameter} $0<\lambda\leq 1$ define the \emph{cost functional} $J\colon X\to\R$,
\begin{align}\label{eq:optCtr}
  \uu\mapsto \norm{\AA\II\yy(\uu) -\zz_d}{H}^2 + \lambda\ip{\CC\uu}{\uu}_X,
\end{align}
where the \emph{state} $\yy=\yy(\uu)\in Y$ is the unique solution of the \emph{state equation}
\begin{align*}
  \LL\yy + \BB\uu = \ff .
\end{align*}

The next subsection collects all the assumptions on the spaces and operators for problem~\eqref{eq:optCtr}.
\subsubsection{Assumptions}\label{sec:ass}
Let $\Omega_1,\dots,\Omega_n$ denote Lipschitz domains.
We consider the following spaces resp. sets,
\begin{itemize}
  \item $H$, $X$, $Y$, and $Y^\star$ Hilbert spaces,
  \item $\Xad\subseteq X$ a non-empty, closed and convex subset, 
  \item $Z= \boldsymbol{L}^2(\Omega_1)\times \cdots \times \boldsymbol{L}^2(\Omega_n)$.
\end{itemize}
Here, $\boldsymbol{L}^2(\Omega_j)$ stands for a Lebesgue space, e.g., $\boldsymbol{L}^2(\Omega_j)= L^2(\Omega_j,\R)$ or
$\boldsymbol{L}^2(\Omega_j)=L^2(\Omega_j;\R^d)$ (with $d$ depending on $\Omega_j$).
We also consider bounded linear operators, 
\begin{itemize}
  \item $\AA\colon Z\to H$, with adjoint $\AA^*\colon H\to Z$, 
  \item $\BB\colon X\to Z$, with adjoint $\BB^*\colon Z\to X$, 
  \item $\CC\colon X\to X$ self-adjoint and positive definite, i.e., 
    \begin{align}\tag{A1}\label{ass:C}
      \CC=\CC^* \text{ and there exists } \kappa>0 \text{ such that }
      \ip{\CC\uu}{\uu}_X \geq \kappa \norm{\uu}{X}^2 \quad\forall \uu\in X.
    \end{align}
\end{itemize}
Moreover, we consider linear operators,
\begin{align*}
  \LL\colon Y\to Z, \quad
  \LL^\star \colon Y^\star\to Z,
\end{align*}
and assume that
\begin{align}\tag{A2}\label{ass:L}
  \LL\text{ and }\LL^\star \text{ are bounded}, \, \ker(\LL)=\{0\}=\ker(\LL^\star), \quad \ran(\LL)=Z=\ran(\LL^\star),
\end{align}
i.e., $\LL$ and $\LL^\star$ are boundedly invertible. Furthermore, we assume that there exist bounded linear operators 
\begin{align*}
  \II&\colon Y\to Z \quad \text{and}\quad \II^\star \colon Y^\star\to Z
\end{align*}
such that $\LL$ and $\LL^\star$ are adjoint in the sense that
\begin{align}\tag{A3}\label{ass:I}
  \ip{\LL\yy}{\II^\star\pp}_Z = \ip{\II\yy}{\LL^\star\pp}_Z \quad\forall \yy\in Y, \pp\in Y^\star.
\end{align}
We anticipate that $\LL$ resp. $\LL^\star$ will correspond to least-squares formulations of
PDEs. Various examples will be given in Section~\ref{sec:examples}.

\begin{remark}
  As stated in the introduction we consider LSFEMs which minimize residuals in $L^2$ norms. 
  It is possible to replace $Z$ by a general Hilbert space.
  To solve the operator equation $\LL\yy = \ff$ in $Z$, a least-squares formulation would read: Find the minimizer of
  \begin{align*}
    \min_{\yy_h\in Y_h} \norm{\LL\yy_h-\ff}{Z}^2.
  \end{align*}
  This requires to implement the inner product in $Z$ which may require the inversion of certain operators, e.g., if $Z$ is the dual space of $H_0^1(\Omega)$.
  For some problems it is possible to replace the inner product by a discrete version, see, e.g.,~\cite{BLP97}.
\end{remark}%

\begin{remark}
  In most applications $\II$ and $\II^\star$ are canonical embedding operators, e.g., $H_0^1(\Omega)\times \Hdivset\Omega \hookrightarrow L^2(\Omega)\times L^2(\Omega;\R^d)$.
  For the formulation of the heat equation considered below (Section~\ref{sec:examples:heat}) these operators also include a trace operation (restriction to initial resp. end time).
\end{remark}%

\subsubsection{Auxiliary results}
By following the proof of~\cite[Ch.~2, Theorem~1.4]{Lions71} we obtain the next result.
\begin{proposition}
  The minimization problem to find $\uu\in \Xad$ such that
  \begin{align*}
    J(\uu) = \min_{\vv\in \Xad} J(\vv)
  \end{align*}
  is equivalent to
  \begin{subequations}\label{eq:model}
  \begin{align}
    \LL\yy &= \ff-\BB\uu,\label{eq:model:a} \\
    \LL^\star\pp &= \AA^*(\AA\II\yy-\zz_d),\label{eq:model:b} \\
    \ip{-\BB^*\II^\star\pp+\lambda\CC\uu}{\vv-\uu}_X &\geq 0 \label{eq:model:c}
  \end{align}
  for all $\vv\in \Xad$.
  \end{subequations}
\end{proposition}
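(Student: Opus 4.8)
The plan is to follow the classical reduction from~\cite[Ch.~2, Theorem~1.4]{Lions71}: eliminate the state via the solution operator of the state equation, reduce the problem to a variational inequality in the control variable alone, and then re-express the resulting first-order condition through the adjoint state. First I would note that, by~\eqref{ass:L}, the operator $\LL$ is boundedly invertible, so the state equation~\eqref{eq:model:a} defines a unique $\yy=\yy(\uu)=\LL^{-1}(\ff-\BB\uu)$ for every $\uu$, and the map $\uu\mapsto\yy(\uu)$ is affine. Consequently the reduced functional $\uu\mapsto J(\uu)$ is the sum of the squared norm of an affine function of $\uu$ (hence convex) and the quadratic term $\ip{\CC\uu}{\uu}$, which by~\eqref{ass:C} is strictly convex and coercive. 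Thus $J$ is strictly convex, coercive, and continuous on the non-empty, closed, convex set $\Xad$, which yields existence and uniqueness of a minimizer, and $\uu\in\Xad$ is this minimizer if and only if the first-order variational inequality
\begin{align*}
  J'(\uu)(\vv-\uu)\ge 0\quad\forall\vv\in\Xad
\end{align*}
holds, convexity making this condition both necessary and sufficient.

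Next I would compute the G\^ateaux derivative. Writing $\delta\yy=\yy'(\uu)\vv$ for the directional derivative of the state, the affine relation gives $\LL\,\delta\yy=-\BB\vv$. Differentiating and using that $\CC$ is self-adjoint by~\eqref{ass:C} yields
\begin{align*}
  \tfrac12 J'(\uu)\vv = \ip{\AA\II\yy-\zz_d}{\AA\II\,\delta\yy}+\ip{\CC\uu}{\vv}
  = \ip{\AA^*(\AA\II\yy-\zz_d)}{\II\,\delta\yy}+\ip{\CC\uu}{\vv}.
\end{align*}

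The key step is to remove the dependence on $\delta\yy$ by introducing the adjoint state $\pp\in Y^\star$ defined through~\eqref{eq:model:b}, which is well-posed because $\LL^\star$ is boundedly invertible by~\eqref{ass:L}. Substituting $\AA^*(\AA\II\yy-\zz_d)=\LL^\star\pp$ and invoking the adjointness assumption~\eqref{ass:I} with $\delta\yy$ in the role of $\yy$, together with symmetry of the $L^2$ inner product, I obtain
\begin{align*}
  \ip{\LL^\star\pp}{\II\,\delta\yy}=\ip{\II^\star\pp}{\LL\,\delta\yy}=-\ip{\II^\star\pp}{\BB\vv}=-\ip{\BB^*\II^\star\pp}{\vv},
\end{align*}
where the last equality uses the definition of $\BB^*$. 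Hence $\tfrac12 J'(\uu)\vv=\ip{-\BB^*\II^\star\pp+\CC\uu}{\vv}$, and the first-order inequality becomes exactly~\eqref{eq:model:c}.

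I expect the adjointness manipulation to be the only delicate point: one must apply~\eqref{ass:I} to the variation $\delta\yy$ rather than to $\yy$ itself, carefully track which of $\II,\II^\star,\LL,\LL^\star$ act on the state and which on the adjoint, and use symmetry of the $L^2$ pairings to move $\LL^\star\pp$ across the inner product. The remaining points --- existence and uniqueness of the minimizer and the necessity-and-sufficiency of the variational inequality --- are standard consequences of strict convexity, coercivity, and closedness of $\Xad$, for which I would cite~\cite[Ch.~2]{Lions71}.
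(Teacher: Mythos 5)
Your proposal is correct and follows essentially the same route as the paper's proof: reduce to the first-order variational inequality for the convex reduced functional, introduce the adjoint state via~\eqref{eq:model:b}, and use~\eqref{ass:I} together with $\LL(\yy(\vv)-\yy(\uu))=-\BB(\vv-\uu)$ to rewrite the derivative term as $\ip{-\BB^*\II^\star\pp}{\vv-\uu}$. The only difference is cosmetic (you phrase the computation via the G\^ateaux derivative $\delta\yy$ rather than the difference $\yy(\vv)-\yy(\uu)$, and you add the standard existence/uniqueness remarks, which the paper defers to Theorem~\ref{thm:constrained}).
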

\begin{proof}
  Note that the functional $J$ is differentiable and convex. The minimization problem therefore is equivalent, see, e.g.~\cite[Lemma~2.21]{Troeltzsch}, to the variational inequality
  \begin{align*}
    \ip{\AA\II\yy(\uu)-\zz_d}{\AA\II(\yy(\vv)-\yy(\uu))}_H + \ip{\lambda\CC\uu}{\vv-\uu}_X \geq 0
    \quad\forall \vv\in \Xad.
  \end{align*}
  Define $\pp\in Y^\star$ as the unique solution of 
  \begin{align*}
    \LL^\star\pp = \AA^*(\AA\II\yy(\uu)-\zz_d)
  \end{align*}
  which is possible due to~\eqref{ass:L}.
  Then, using~\eqref{ass:I} gives
  \begin{align*}
    \ip{\AA\II\yy(\uu)-\zz_d}{\AA\II(\yy(\vv)-\yy(\uu))}_H &= \ip{\AA^*(\AA\II\yy(\uu)-\zz_d)}{\II(\yy(\vv)-\yy(\uu))}_Z
    \\
    &= \ip{\LL^\star \pp}{\II(\yy(\vv)-\yy(\uu))}_Z = \ip{\II^\star\pp}{\LL(\yy(\vv)-\yy(\uu))}_Z.
  \end{align*}
  Employing that $\LL\yy(\uu) = \ff-\BB\uu$ and $\LL\yy(\vv)=\ff-\BB\vv$ we end up with
  \begin{align*}
    \ip{\AA\II\yy(\uu)-\zz_d}{\AA\II(\yy(\vv)-\yy(\uu))}_H = -\ip{\II^\star\pp}{\BB(\vv-\uu)}_Z = \ip{-\BB^*\II^\star\pp}{\vv-\uu}_X.
  \end{align*}
  This leads to the variational inequality
  \begin{align*}
    \ip{-\BB^*\II^\star\pp+\lambda\CC\uu}{\vv-\uu}_X\geq 0 \quad\forall \vv \in \Xad
  \end{align*}
  which finishes the proof.
\end{proof}

For the analysis below we make use of a parameter-dependent norm on the product space $X\times Y\times Y^\star$ given for all $(\uu,\yy,\pp)\in X\times Y\times Y^\star$ by
\begin{align*}
  \enorm{(\uu,\yy,\pp)}_\lambda^2 &:= \lambda\norm{\uu}X^2 + \norm{\yy-\yy_{\uu}}{Y}^2 + \lambda^{-1}\norm{\pp-\pp_{\yy}}{Y^\star}^2 + \norm{\AA\II\yy_{\uu}}H^2, 
\end{align*}
where for each $\uu\in X$, $\yy\in Y$ the functions $\yy_{\uu}\in Y$ and $\pp_{\yy}\in Y^\star$ are defined as solutions of
\begin{align*}
  \LL\yy_{\uu} = -\BB\uu, \quad \LL^\star\pp_{\yy} = \AA^*\AA\II\yy.
\end{align*}
From this point on we write $A\lesssim B$ for $A,B\geq 0$ if there exists a generic constant $C>0$ independent of the cost parameter $\lambda$ such that $A\leq C\cdot B$. 
If both inequalities hold, i.e., $A\lesssim B \lesssim A$ then we write $A\eqsim B$.
\begin{lemma}\label{lem:normequiv}
  The mapping $\enorm{\cdot}_\lambda\colon X\times Y\times Y^\star\to \R_{\geq 0}$, $(\uu,\yy,\pp)\mapsto \enorm{(\uu,\yy,\pp)}_\lambda$ defines a norm on the product space $X\times Y\times Y^\star$ with
  \begin{align*}
    \lambda^{1/2} \norm{(\uu,\yy,\pp)}{X\times Y\times Y^\star} \lesssim \enorm{(\uu,\yy,\pp)}_\lambda \lesssim \lambda^{-1/2} \norm{(\uu,\yy,\pp)}{X\times Y\times Y^\star} \quad\forall (\uu,\yy,\pp)\in X\times Y\times Y^\star.
  \end{align*}
\end{lemma}
\begin{proof}
  By linearity of the definitions $\yy_{\uu}$, $\pp_{\yy}$ the triangle inequality and homogeneity of $\enorm{\cdot}_\lambda$ follow.
  Definiteness can be seen from the equivalence estimate. Let $(\uu,\yy,\pp)\in X\times Y\times Y^\star$ be given. By the definition of $\yy_{\uu}$ and $\pp_{\yy}$ and the assumptions of Section~\ref{sec:ass} we have 
  $\norm{\yy_{\uu}}Y\lesssim \norm{\uu}X$ and $\norm{\pp_{\yy}}{Y^\star}\lesssim \norm{\yy}{Y}$.
  Using the latter estimates together with the triangle inequality proves
  \begin{align*}
    \lambda \norm{(\uu,\yy,\pp)}{X\times Y\times Y^\star}^2 &= \lambda\norm{\uu}{X}^2 + \lambda\norm{\yy}{Y}^2 + \lambda \norm{\pp}{Y^\star}^2 
    \\
    &\lesssim \lambda\norm{\uu}{X}^2 + \lambda\norm{\yy}{Y}^2 + \lambda \norm{\pp-\pp_{\yy}}{Y^\star}^2\\
    &\lesssim \lambda\norm{\uu}{X}^2 + \lambda\norm{\yy-\yy_{\uu}}{Y}^2 + \lambda \norm{\pp-\pp_{\yy}}{Y^\star}^2 \\
    &\lesssim \lambda\norm{\uu}{X}^2 + \norm{\yy-\yy_{\uu}}{Y}^2 + \lambda^{-1} \norm{\pp-\pp_{\yy}}{Y^\star}^2 + \norm{\AA\II\yy_{\uu}}H^2
    \\
    &= \enorm{(\uu,\yy,\pp)}_\lambda^2,
  \end{align*}
  where we used $0 < \lambda\leq 1$.
  For the upper bound the very same arguments show that
  \begin{align*}
    \enorm{(\uu,\yy,\pp)}_\lambda^2 &= \lambda\norm{\uu}{X}^2 + \norm{\yy-\yy_{\uu}}{Y}^2 + \lambda^{-1} \norm{\pp-\pp_{\yy}}{Y^\star}^2 + \norm{\AA\II\yy_{\uu}}H^2
    \\
    &\lesssim \lambda\norm{\uu}{X}^2 + \lambda^{-1}\norm{\yy}{Y}^2 + \lambda^{-1} \norm{\pp}{Y^\star}^2 
    + \norm{\yy_{\uu}}{Y}^2 + \lambda^{-1} \norm{\pp_{\yy}}{Y^\star}^2 + \norm{\uu}{X}^2
    \\
    &\lesssim \norm{\uu}{X}^2 + \lambda^{-1}\norm{\yy}{Y}^2 + \lambda^{-1} \norm{\pp}{Y^\star}^2 \leq \lambda^{-1}\norm{(\uu,\yy,\pp)}{X\times Y\times Y^\star}^2. 
  \end{align*}
  This finishes the proof.
\end{proof}

\subsection{LSFEM for constrained optimal control problem}\label{sec:lsfem:constrained}
Let us define the bilinear form $a\colon (X\times Y\times Y^\star)^2\to \R$ by
\begin{align*}
  a(\uu,\yy,\pp;\vv,\zz,\qq) &= \alpha\ip{\LL\yy+\BB\uu}{\LL\zz+\BB\vv}_Z + \beta\ip{\LL^\star\pp-\AA^*\AA\II\yy}{\LL^\star\qq-\AA^*\AA\II\zz}_Z
  \\
  &\qquad + \ip{-\BB^*\II^\star\pp+\lambda\CC\uu}{\vv}_X
\end{align*}
for $\uu,\vv\in X$, $\yy,\zz\in Y$, $\pp,\qq\in Y^\star$. Here $\alpha,\beta$ denote positive constants.
We also consider the load functional $\ell\colon X\times Y\times Y^\star\to \R$, for given $\ff\in Z,\zz_d\in H$, defined by
\begin{align*}
  \ell(\vv,\zz,\qq) = \alpha\ip{\ff}{\LL\zz+\BB\vv}_Z + \beta\ip{-\AA^*\zz_d}{\LL^\star\qq-\AA^*\AA\II\zz}_Z.
\end{align*}

We now show that~\eqref{eq:model} is equivalent to the following variational inequality: Find $(\uu,\yy,\pp)\in \Xad\times Y\times Y^\star$ such that
\begin{align}\label{eq:varin:constrained}
  a(\uu,\yy,\pp;\vv-\uu,\zz-\yy,\qq-\pp) \geq \ell(\vv-\uu,\zz-\yy,\qq-\pp) \quad\text{for all } 
  (\vv,\zz,\qq)\in \Xad \times Y \times Y^\star.
\end{align}

\begin{proposition}
  Problems~\eqref{eq:model} and~\eqref{eq:varin:constrained} are equivalent. 
\end{proposition}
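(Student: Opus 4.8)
The plan is to prove the equivalence by rewriting the tested bilinear form in residual notation and then handling the two implications separately. First I would introduce the state and adjoint residuals $\rr_1 := \LL\yy+\BB\uu-\ff$ and $\rr_2 := \LL^\star\pp-\AA^*\AA\II\yy+\AA^*\zz_d$ and observe, by a direct regrouping of the least-squares terms in $a$ against the matching terms in $\ell$, that
\begin{align*}
  a(\uu,\yy,\pp;\vv,\zz,\qq)-\ell(\vv,\zz,\qq)
  &= \gamma\ip{\rr_1}{\LL\zz+\BB\vv} + \gamma\ip{\rr_2}{\LL^\star\qq-\AA^*\AA\II\zz} \\
  &\qquad + \ip{-\BB^*\II^\star\pp+\CC\uu}{\vv}.
\end{align*}
The whole point of this identity is that $\rr_1=0$ is exactly the state equation~\eqref{eq:model:a} and $\rr_2=0$ is exactly the adjoint equation~\eqref{eq:model:b}.

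For the implication \eqref{eq:model}$\,\Rightarrow\,$\eqref{eq:varin:constrained} I would simply insert a solution of~\eqref{eq:model}. Then $\rr_1=0=\rr_2$, the first two terms in the identity above vanish, and evaluating at the admissible directions $(\vv-\uu,\zz-\yy,\qq-\pp)$ leaves only $\ip{-\BB^*\II^\star\pp+\CC\uu}{\vv-\uu}$, which is nonnegative for every $\vv\in\Xad$ by~\eqref{eq:model:c}. Hence~\eqref{eq:varin:constrained} holds.

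For the converse \eqref{eq:varin:constrained}$\,\Rightarrow\,$\eqref{eq:model} the key observation is that the test directions for $\zz$ and $\qq$ are unconstrained. Choosing $\vv=\uu$ deactivates the control constraint, and since $\zz-\yy$ ranges over all of $Y$ and $\qq-\pp$ over all of $Y^\star$, one may test with a direction and its negative; the inequality therefore self-improves to the identity $\gamma\ip{\rr_1}{\LL(\zz-\yy)} + \gamma\ip{\rr_2}{\LL^\star(\qq-\pp)-\AA^*\AA\II(\zz-\yy)} = 0$ for all $\zz\in Y$, $\qq\in Y^\star$. Taking first $\zz=\yy$ isolates $\gamma\ip{\rr_2}{\LL^\star(\qq-\pp)}=0$, and since $\ran(\LL^\star)=Z$ by~\eqref{ass:L}, this forces $\rr_2=0$, i.e.~\eqref{eq:model:b}. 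Taking then $\qq=\pp$ and using $\rr_2=0$ leaves $\gamma\ip{\rr_1}{\LL(\zz-\yy)}=0$, and $\ran(\LL)=Z$ yields $\rr_1=0$, i.e.~\eqref{eq:model:a}. With both residuals zero, the full inequality~\eqref{eq:varin:constrained} collapses to $\ip{-\BB^*\II^\star\pp+\CC\uu}{\vv-\uu}\geq 0$ for all $\vv\in\Xad$, which is~\eqref{eq:model:c}.

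The only genuinely delicate point is the asymmetric treatment of the three test directions: the state and adjoint directions live in the full linear spaces $Y$ and $Y^\star$, so the variational inequality in those two components is equivalent to two variational \emph{equalities}, whereas the control direction stays confined to the convex set $\Xad$ and therefore remains a genuine inequality. The surjectivity assumption~\eqref{ass:L} on $\LL$ and $\LL^\star$ is precisely what lets me pass from these tested identities to the pointwise vanishing of $\rr_1$ and $\rr_2$; everything else is bookkeeping.
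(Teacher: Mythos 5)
Your proof is correct and follows essentially the same route as the paper's: the forward direction by direct substitution, and the converse by exploiting the $\pm$ freedom in the unconstrained directions, deriving the adjoint equation first (with $\zz=\yy$), then the state equation using the already-established vanishing of the adjoint residual, and finally reducing the inequality to the control condition, with surjectivity of $\LL$ and $\LL^\star$ from~\eqref{ass:L} doing the same work in both arguments. The only difference is cosmetic: you package the computation in residual notation $\rr_1,\rr_2$, whereas the paper writes the terms out explicitly.
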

\begin{proof}
  If $\uu,\yy,\pp$ solve~\eqref{eq:model}, then they also satisfy~\eqref{eq:varin:constrained}. 

  Let $\uu,\yy,\pp$ be a solution of~\eqref{eq:varin:constrained}. 
  By testing~\eqref{eq:varin:constrained} with $\vv=\uu$, $\zz = \yy$, $\qq=\pp\pm\ww$, $\ww\in Y^\star$, we see that
  \begin{align*}
    \pm\beta\ip{\LL^\star\pp-\AA^*\AA\II\yy}{\LL^\star\ww}_Z\geq \pm \beta\ip{-\AA^*\zz_d}{\LL^\star\ww}_Z, 
  \end{align*}
  or equivalently
  \begin{align*}
    \ip{\LL^\star\pp-\AA^*\AA\II\yy}{\LL^\star\ww}_Z = \ip{-\AA^*\zz_d}{\LL^\star\ww}_Z
  \end{align*}
  for all $\ww\in Y^\star$. With~\eqref{ass:L}, this yields
  \begin{align*}
    \LL^\star\pp = \AA^*(\AA\II\yy-\zz_d),
  \end{align*}
  which is~\eqref{eq:model:b}.
  By testing~\eqref{eq:varin:constrained} with $\vv=\uu$, $\zz=\yy\pm\ww$, $\ww\in Y$, $\qq=\pp$, and using the latter identity
  we further conclude with a similar argumentation as before that
  \begin{align*}
    \ip{\LL\yy+\BB\uu}{\LL\ww}_Z = \ip{\ff}{\LL\ww}_Z \quad\forall \ww\in Y,
  \end{align*}
  which means that, using again~\eqref{ass:L},
  \begin{align*}
    \LL\yy + \BB\uu = \ff, 
  \end{align*}
  i.e.,~\eqref{eq:model:a}.  Using the identities~\eqref{eq:model:a}--\eqref{eq:model:b}, we see that
  the variational inequality~\eqref{eq:varin:constrained} reduces to
  \begin{align*}
    0&\leq a(\uu,\yy,\pp;\vv-\uu,\zz-\yy,\qq-\pp)-\ell(\vv-\uu,\zz-\yy,\qq-\pp) 
    \\&= \ip{-\BB^*\II^\star\pp+\lambda\CC\uu}{\vv-\uu}_X 
  \end{align*}
  for all $\vv\in \Xad$. This is~\eqref{eq:model:c} and, thus, finishes the proof.
\end{proof}

The next result shows that the bilinear form $a$ is coercive and bounded, given that the constants $\alpha,\beta$ are sufficiently large.
In other words, $a$ induces an equivalent norm on the underlying energy space, and thus Problem~\eqref{eq:varin:constrained} admits a unique solution.

\begin{theorem}\label{thm:constrained}
  There exists $\alpha_0,\beta_0$ with $\alpha_0\eqsim 1$, $\beta_0\eqsim \lambda^{-1}$ such that for all $\alpha\geq\alpha_0$, $\beta\geq \beta_0$ and all $(\uu,\yy,\pp),(\vv,\zz,\qq)\in X\times Y \times Y^\star$, %
  \begin{align*}
    \enorm{(\uu,\yy,\pp)}_\lambda^2 &\lesssim a(\uu,\yy,\pp;\uu,\yy,\pp),    \\
    |a(\uu,\yy,\pp;\vv,\zz,\qq)| &\lesssim \max\{\alpha,\lambda\beta\}\enorm{(\uu,\yy,\pp)}_\lambda \enorm{(\vv,\zz,\qq)}_\lambda,
  \end{align*}%
  i.e., the bilinear form $a(\cdot,\cdot)$ is coercive and bounded with respect to the norm $\enorm{\cdot}_\lambda$.
  For any non-empty closed convex subset $W\subseteq X\times Y\times Y^\star$, the variational inequality to find $(\uu,\yy,\pp)\in W$ such that
  \begin{align}\label{eq:varineq:general}
    a(\uu,\yy,\pp;\vv-\uu,\zz-\yy,\qq-\pp) \geq \ell(\vv-\uu,\zz-\yy,\qq-\pp) \quad\text{for all } 
    (\vv,\zz,\qq)\in W
  \end{align}
  admits a unique solution. In particular, Problem~\eqref{eq:model} respectively~\eqref{eq:varin:constrained} admits a unique solution.
\end{theorem}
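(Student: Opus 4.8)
The plan is to verify that $a$ is a bounded, coercive bilinear form on the product Hilbert space $X\times Y\times Y^\star$ endowed with the norm whose square is $\norm{\uu}{}^2+\norm{\yy}{Y}^2+\norm{\pp}{Y^\star}^2$, and that $\ell$ is a bounded linear functional; existence and uniqueness of a solution of~\eqref{eq:varineq:general} for any non-empty closed convex $H$ then follow from the Lions--Stampacchia theorem, which applies to coercive continuous bilinear forms without requiring symmetry. The final assertion on~\eqref{eq:model} and~\eqref{eq:varin:constrained} is the special case $H=\Xad\times Y\times Y^\star$ combined with the two equivalence propositions already proved. Boundedness of $a$ and $\ell$ I regard as routine: each factor is estimated by Cauchy--Schwarz together with the boundedness of $\AA,\AA^*,\BB,\BB^*,\CC,\II,\II^\star$ and of $\LL,\LL^\star$ from~\eqref{ass:L}, which also supplies the norm equivalences $\norm{\yy}{Y}\eqsim\norm{\LL\yy}{}$ and $\norm{\pp}{Y^\star}\eqsim\norm{\LL^\star\pp}{}$ that I use repeatedly below.

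Coercivity is the heart of the matter. I abbreviate the residuals $R_1:=\LL\yy+\BB\uu$ and $R_2:=\LL^\star\pp-\AA^*\AA\II\yy$, so that on the diagonal
\begin{align*}
  a(\uu,\yy,\pp;\uu,\yy,\pp)=\gamma\norm{R_1}{}^2+\gamma\norm{R_2}{}^2+\ip{\CC\uu}{\uu}-\ip{\II^\star\pp}{\BB\uu}.
\end{align*}
From $\LL\yy=R_1-\BB\uu$ and the norm equivalence I first obtain $\norm{\yy}{Y}^2\lesssim\norm{R_1}{}^2+\norm{\uu}{}^2$, and then from $\LL^\star\pp=R_2+\AA^*\AA\II\yy$ that $\norm{\pp}{Y^\star}^2\lesssim\norm{R_2}{}^2+\norm{\yy}{Y}^2\lesssim\norm{R_1}{}^2+\norm{R_2}{}^2+\norm{\uu}{}^2$. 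Hence it suffices to prove $\norm{\uu}{}^2+\norm{R_1}{}^2+\norm{R_2}{}^2\lesssim a(\uu,\yy,\pp;\uu,\yy,\pp)$, after which the bounds on $\yy$ and $\pp$ close the estimate.

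The obstacle is the cross term $-\ip{\II^\star\pp}{\BB\uu}$, which has no sign; a direct Young estimate against $\norm{\uu}{}^2$ fails, because the coupling constant need not be dominated by $\kappa$ and cannot be compensated by $\gamma$. The decisive step I would take is to eliminate $\BB\uu$ through the state residual, $\BB\uu=R_1-\LL\yy$, and then to move $\LL\yy$ onto the adjoint variable by means of~\eqref{ass:I} and the adjoint residual:
\begin{align*}
  \ip{\II^\star\pp}{\BB\uu}
  =\ip{\II^\star\pp}{R_1}-\ip{\LL^\star\pp}{\II\yy}
  =\ip{\II^\star\pp}{R_1}-\ip{R_2}{\II\yy}-\norm{\AA\II\yy}{}^2,
\end{align*}
where the first equality uses $\ip{\II^\star\pp}{\LL\yy}=\ip{\LL^\star\pp}{\II\yy}$ from~\eqref{ass:I} and the second uses $\LL^\star\pp=R_2+\AA^*\AA\II\yy$. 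The point is that this turns the troublesome cross term into the nonnegative contribution $\norm{\AA\II\yy}{}^2$ plus two terms $-\ip{\II^\star\pp}{R_1}+\ip{R_2}{\II\yy}$ that now each carry a residual factor.

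It then remains to absorb these residual-weighted terms. Discarding $\norm{\AA\II\yy}{}^2\geq0$ and invoking~\eqref{ass:C}, Young's inequality gives, with a generic constant $C$, bounds $|\ip{\II^\star\pp}{R_1}|\leq\tfrac{\gamma}{4}\norm{R_1}{}^2+\tfrac{C}{\gamma}\norm{\pp}{Y^\star}^2$ and $|\ip{R_2}{\II\yy}|\leq\tfrac{\gamma}{4}\norm{R_2}{}^2+\tfrac{C}{\gamma}\norm{\yy}{Y}^2$. Inserting the earlier bounds on $\norm{\yy}{Y}^2$ and $\norm{\pp}{Y^\star}^2$, every $\gamma^{-1}$-weighted contribution becomes small for large $\gamma$, leading to an estimate of the form
\begin{align*}
  a(\uu,\yy,\pp;\uu,\yy,\pp)\geq\Big(\tfrac{\gamma}{2}-\tfrac{C}{\gamma}\Big)\big(\norm{R_1}{}^2+\norm{R_2}{}^2\big)+\Big(\kappa-\tfrac{C}{\gamma}\Big)\norm{\uu}{}^2.
\end{align*}
Choosing $\gamma_0$ large enough renders all three coefficients positive and bounded below independently of $(\uu,\yy,\pp)$; combined with the controls on $\norm{\yy}{Y}$ and $\norm{\pp}{Y^\star}$ this yields coercivity, and the proof concludes by Lions--Stampacchia as indicated. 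The only genuinely delicate point is the algebraic identity above—the rest is standard absorption.
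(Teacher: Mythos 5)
Your proof is correct, and it rests on the same structural insight as the paper's --- namely that the adjoint relation~\eqref{ass:I} converts the signless cross term $\ip{\II^\star\pp}{\BB\uu}$ into a nonnegative quantity of the form $\norm{\AA\II\cdot}{}^2$ plus terms that each carry a residual factor and can therefore be absorbed by Young's inequality once $\gamma$ is large. The execution, however, is organized differently. The paper introduces auxiliary exact solutions $\yy(\uu)$, $\pp(\uu)$ of $\LL\yy(\uu)=-\BB\uu$ and $\LL^\star\pp(\uu)=\AA^*\AA\II\yy(\uu)$, shows $\ip{\BB^*\II^\star\pp(\uu)}{\uu}=-\norm{\AA\II\yy(\uu)}{}^2\le 0$, and then estimates $\ip{\BB^*\II^\star\pp}{\uu}\le\ip{\BB^*\II^\star(\pp-\pp(\uu))}{\uu}$, with $\norm{\pp-\pp(\uu)}{Y^\star}$ controlled by the residuals. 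You instead work directly with the residuals $R_1=\LL\yy+\BB\uu$ and $R_2=\LL^\star\pp-\AA^*\AA\II\yy$ and derive the identity $-\ip{\II^\star\pp}{\BB\uu}=\norm{\AA\II\yy}{}^2-\ip{\II^\star\pp}{R_1}+\ip{R_2}{\II\yy}$, which I have checked and which is exactly right. Your route avoids introducing and bounding the auxiliary solutions and makes the mechanism of the proof --- where the hidden positivity comes from and which terms need absorbing --- somewhat more transparent; the paper's version has the minor advantage that the perturbation quantities $\yy-\yy(\uu)$, $\pp-\pp(\uu)$ reappear verbatim in the a posteriori analysis later in the section. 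The remaining steps (boundedness via Cauchy--Schwarz and the standing assumptions, the bounds $\norm{\yy}{Y}^2\lesssim\norm{R_1}{}^2+\norm{\uu}{}^2$ and $\norm{\pp}{Y^\star}^2\lesssim\norm{R_1}{}^2+\norm{R_2}{}^2+\norm{\uu}{}^2$ from~\eqref{ass:L}, and the appeal to Lions--Stampacchia for nonsymmetric coercive forms) match the paper and are sound.
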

\begin{proof}
  It suffices to prove that the bilinear form is coercive and bounded and that the right-hand side is linear and bounded.
  Existence and uniqueness of the given variational inequality then follow from the Lions--Stampacchia theorem, cf.~\cite{LionsS_67}.
  Let $(\uu,\yy,\pp),(\vv,\zz,\qq)\in X\times Y \times Y^\star$ be given. Recall that $\yy_{\uu}$ and $\pp_{\yy}$ are given as solutions of
  \begin{align*}
    \LL\yy_{\uu} = -\BB\uu, \quad \LL^\star \pp_{\yy} = \AA^*\AA\II\yy.
  \end{align*}
By the assumptions of Section~\ref{sec:ass} we have $\norm{\yy_{\uu}}Y\lesssim \norm{\uu}{X}$ and $\norm{\pp_{\yy}}{Y^\star}\lesssim \norm{\yy}{Y}$.
Moreover, 
  \begin{align*}
    \norm{\yy-\yy_{\uu}}{Y} \eqsim \norm{\LL(\yy-\yy_{\uu})}{Z} = \norm{\LL\yy+\BB\uu}{Z}
  \end{align*}
  and
  \begin{align*}
    \norm{\pp-\pp_{\yy}}{Y^\star}&\eqsim \norm{\LL^\star(\pp-\pp_{\yy})}Z = \norm{\LL^\star\pp-\AA^*\AA\II\yy}{Z}.
  \end{align*}
  Putting the last two estimates together proves
  \begin{align}\label{eq:proofcoerc1}
    \norm{\LL\yy+\BB\uu}{Z} + \lambda^{-1/2}\norm{\LL^\star\pp-\AA^*\AA\II\yy}{Z} \eqsim  \norm{\yy-\yy_{\uu}}{Y} + \lambda^{-1/2}\norm{\pp-\pp_{\uu}}{Y^\star} \lesssim \enorm{(\uu,\yy,\pp)}_\lambda,
  \end{align}
  which is used in the remainder of the proof. 

  To prove boundedness we consider the individual terms in the bilinear form. First, by the Cauchy--Schwarz inequality and~\eqref{eq:proofcoerc1} we see that 
  \begin{align*}
    \alpha|\ip{\LL\yy+\BB\uu}{\LL\zz+\BB\vv}_Z| &\lesssim \alpha \norm{\LL\yy+\BB\uu}Z\norm{\LL\zz+\BB\vv}Z \\
    &\lesssim \alpha \enorm{(\uu,\yy,\pp)}_\lambda\enorm{(\vv,\zz,\qq)}_\lambda.
  \end{align*}
  Second, the same arguments yield
  \begin{align*}
    \beta |\ip{\LL^\star\pp-\AA^*\AA\II\yy}{\LL^\star\qq-\AA^*\AA\II\zz}_Z| &\leq \lambda\beta \lambda^{-1/2}\norm{\LL^\star\pp-\AA^*\AA\II\yy}Z \lambda^{-1/2}\norm{\LL^\star\qq-\AA^*\AA\II\zz}Z
    \\ 
    &\lesssim \lambda\beta \enorm{(\uu,\yy,\pp)}_\lambda\enorm{(\vv,\zz,\qq)}_\lambda.
  \end{align*}
  For the estimate of the final term in the bilinear form we note that 
  \begin{align}\label{eq:proofcoerc2}
  \begin{split}
    \ip{\BB^*\II^\star\pp_{\yy}}{\vv}_X &= \ip{\II^\star\pp_{\yy}}{\BB\vv}_Z = \ip{\II^\star\pp_{\yy}}{-\LL\yy_{\vv}}_Z
    \\
    &=-\ip{\LL^\star\pp_{\yy}}{\II\yy_{\vv}}_Z = -\ip{\AA^*\AA\II\yy}{\II\yy_{\vv}}_Z = -\ip{\AA\II\yy}{\AA\II\yy_{\vv}}_H.
  \end{split}
  \end{align}
  This yields 
  \begin{align*}
    |\ip{\BB^*\II^\star\pp_{\yy}}{\vv}_X| &\leq |\ip{\AA\II(\yy-\yy_{\uu})}{\AA\II\yy_{\vv}}_H| + |\ip{\AA\II\yy_{\uu}}{\AA\II\yy_{\vv}}_H|
    \\&\lesssim (\norm{\yy-\yy_{\uu}}Y+\norm{\AA\II\yy_{\uu}}H)\norm{\AA\II\yy_{\vv}}H \lesssim \enorm{(\uu,\yy,\pp)}_\lambda\enorm{(\vv,\zz,\qq)}_\lambda. 
  \end{align*}
  Then, 
  \begin{align*}
    |\ip{-\BB^*\II^\star\pp+\lambda\CC\uu}{\vv}_X| &\leq |\ip{\BB^*\II^\star(\pp-\pp_{\yy})}{\vv}_X| + |\ip{\BB^*\II^\star\pp_{\yy}}{\vv}_X|
    + \lambda |\ip{\CC\uu}{\vv}_X| 
    \\
    &\lesssim \lambda^{-1/2}\norm{\pp-\pp_{\yy}}{Y^\star}\lambda^{1/2}\norm{\vv}X + \enorm{(\uu,\yy,\pp)}_\lambda\enorm{(\vv,\zz,\qq)}_\lambda 
    \\ &\qquad + \lambda^{1/2}\norm{\uu}X\lambda^{1/2}\norm{\vv}X
    \\
    &\lesssim \enorm{(\uu,\yy,\pp)}_\lambda\enorm{(\vv,\zz,\qq)}_\lambda.
  \end{align*}
  This shows boundedness of the bilinear form. The very same arguments can be used to prove that the right-hand side functional $\ell$ is bounded. The details are omitted.

  It remains to prove coercivity.
  By the definition of the norm $\enorm\cdot_\lambda$ and~\eqref{eq:proofcoerc1} we get
  \begin{align*}
    \enorm{(\uu,\yy,\pp)}_\lambda^2 &= \norm{\yy-\yy_{\uu}}{Z}^2 + \lambda^{-1}\norm{\pp-\pp_{\yy}}Z^2 + \lambda \norm{\uu}X^2 + \norm{\AA\II\yy_{\uu}}H^2 
    \\
    &\eqsim \norm{\LL\yy+\BB\uu}Z^2 + \lambda^{-1}\norm{\LL^\star\pp-\AA^*\AA\II\yy}Z^2 
    + \lambda \ip{\CC\uu}{\uu}_X + \norm{\AA\II\yy_{\uu}}H^2.
    \\
    &= \norm{\LL\yy+\BB\uu}Z^2 + \lambda^{-1}\norm{\LL^\star\pp-\AA^*\AA\II\yy}Z^2 
    + \ip{-\BB^*\II^\star\pp+\lambda\CC\uu}{\uu}_X 
    \\
    &\qquad\qquad+ \ip{\BB^*\II^\star(\pp-\pp_{\yy})}{\uu}_X + \ip{\BB^*\II^\star\pp_{\yy}}{\uu}_X + \norm{\AA\II\yy_{\uu}}H^2.
  \end{align*}
  Using identity~\eqref{eq:proofcoerc2} with $\vv=\uu$ we see that 
  \begin{align*}
    \ip{\BB^*\II^\star\pp_{\yy}}{\uu}_X + \norm{\AA\II\yy_{\uu}}H^2 &= \ip{\AA\II(\yy_{\uu}-\yy)}{\AA\II\yy_{\uu}}_H
    \\
    &\leq C_1 \delta^{-1}/2 \norm{\LL\yy+\BB\uu}{Z}^2 + \delta/2 \norm{\AA\II\yy_{\uu}}H^2.
  \end{align*}
  In the last estimate we have used Young's inequality with $\delta>0$ and a constant $C_1>0$. 
  Employing Young's inequality again we further obtain
  \begin{align*}
    |\ip{\BB^*\II^\star(\pp-\pp_{\yy})}{\uu}_X| \leq  C_2 \delta^{-1}/2\lambda^{-1}\norm{\LL^\star\pp-\AA^*\AA\II\yy}{Z}^2 + \delta/2 \lambda\norm{\uu}{X}^2. 
  \end{align*}
  where $C_2>0$ denotes another positive constant.
  Combining the last two estimates together and using $\lambda\norm{\uu}{X}^2 + \norm{\AA\II\yy_{\uu}}H^2\leq \enorm{(\uu,\yy,\pp)}_\lambda^2$ we conclude that
  \begin{align*}
    \enorm{(\uu,\yy,\pp)}_\lambda^2 &\lesssim \norm{\LL\yy+\BB\uu}Z^2 + \lambda^{-1}\norm{\LL^\star\pp-\AA^*\AA\II\yy}Z^2 
    + \ip{-\BB^*\II^\star\pp+\lambda\CC\uu}{\uu}_X 
    \\
    &\qquad\qquad+ \ip{\BB^*\II^\star(\pp-\pp_{\yy})}{\uu}_X + \ip{\BB^*\II^\star\pp_{\yy}}{\uu}_X + \norm{\AA\II\yy_{\uu}}H^2
    \\
    &\leq (1+ C_1 \delta^{-1}/2) \norm{\LL\yy+\BB\uu}{Z}^2 + (1+C_2\delta^{-1}/2)\lambda^{-1}\norm{\LL^\star\pp-\AA^*\AA\II\yy}Z^2
    \\
    &\qquad\qquad+ \delta/2 \enorm{(\uu,\yy,\pp)}_\lambda^2
  \end{align*}
  Subtracting the last term on the right-hand side for sufficiently small $\delta>0$ and setting $\alpha_0 := (1+C_1\delta^{-1}/2)$, $\beta_0 :=\lambda^{-1}(1+C_2\delta^{-1})$ finishes the proof.
\end{proof}

\subsubsection{A priori error analysis}
In this section we derive quasi-optimality results using well-known techniques for variational inequalities, see, e.g.~\cite{Falk74}. 
In particular, following the proof of~\cite[Theorem~1]{Falk74} shows the next result.
\begin{theorem}\label{thm:apriori}
  Suppose that $1\eqsim\alpha\geq\alpha_0$ and $\lambda^{-1}\eqsim\beta\geq\beta_0$, such that the results of Theorem~\ref{thm:constrained} hold true. 
  Let $(\uu,\yy,\pp) \in \Xad\times Y\times Y^\star$ denote the unique solution of~\eqref{eq:varin:constrained}. 
  Let $X_{h,\mathrm{ad}}\subset \Xad$ denote a non-empty closed convex subset and $Y_h\subset Y$, $Y_h^\star\subset Y^\star$ closed subspaces.
  Set $W_h = X_{h,\mathrm{ad}}\times Y_h\times Y_h^\star$.
  If $(u_h,\yy_h,\pp_h)\in W_h$ denotes the solution of~\eqref{eq:varineq:general}, then
  \begin{align*}
    &\enorm{(\uu-\uu_h,\yy-\yy_h,\pp-\pp_h)}_\lambda^2
    \\ &\qquad\lesssim 
    \inf_{(\vv_h,\zz_h,\qq_h)\in W_h}\left( \enorm{(\uu-\vv_h,\yy-\zz_h,\pp-\qq_h)}_\lambda^2 + |\ip{-\BB^*\II^\star\pp+\lambda\CC \uu}{\vv_h-\uu}_{X}| \right).
  \end{align*}
\end{theorem}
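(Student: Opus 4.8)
The plan is to follow the standard Falk technique for variational inequalities, exploiting that by Theorem~\ref{thm:constrained} the form $a$ is coercive and bounded on $X\times Y\times Y^\star$. Write the exact solution as $x=(\uu,\yy,\pp)$, the discrete solution as $x_h=(\uu_h,\yy_h,\pp_h)$, and let $w=(\vv_h,\ww_h,\qq_h)\in H$ be arbitrary; abbreviate the energy norm by $\enorm{\cdot}$. The starting point is coercivity, $\enorm{x-x_h}^2\lesssim a(x-x_h;x-x_h)$, into which I insert $w$ by splitting the second argument,
\begin{align*}
  a(x-x_h;x-x_h) = a(x-x_h;x-w) + a(x-x_h;w-x_h).
\end{align*}
Boundedness of $a$ controls the first term by $\enorm{x-x_h}\,\enorm{x-w}$, so the whole argument reduces to estimating $a(x-x_h;w-x_h)$.

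For the second term I would use the two variational inequalities. Writing $a(x-x_h;w-x_h)=a(x;w-x_h)-a(x_h;w-x_h)$, the discrete inequality~\eqref{eq:varineq:general} tested with $\vv_h=w\in H$ gives $a(x_h;w-x_h)\geq\ell(w-x_h)$, hence
\begin{align*}
  a(x-x_h;w-x_h)\leq a(x;w-x_h)-\ell(w-x_h).
\end{align*}
I then split $w-x_h=(w-x)+(x-x_h)$ to get the two contributions $\bigl[a(x;w-x)-\ell(w-x)\bigr]$ and $\bigl[a(x;x-x_h)-\ell(x-x_h)\bigr]$. The second contribution is nonpositive: since $H\subseteq\Xad\times Y\times Y^\star$, the element $x_h$ is admissible in the continuous inequality~\eqref{eq:varin:constrained}, and testing with $\vv=\uu_h,\zz=\yy_h,\qq=\pp_h$ yields $a(x;x-x_h)-\ell(x-x_h)\leq0$, so it may simply be dropped.

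The key step is the consistency identity for the remaining contribution $a(x;w-x)-\ell(w-x)$. Here I exploit that at the exact solution the least-squares residuals vanish by~\eqref{eq:model:a}--\eqref{eq:model:b}, i.e.\ $\LL\yy+\BB\uu=\ff$ and $\LL^\star\pp-\AA^*\AA\II\yy=-\AA^*\zz_d$. Substituting these into the definitions of $a$ and $\ell$, the two least-squares brackets in $a(x;\cdot)$ cancel exactly against $\ell(\cdot)$, leaving only the duality term, so that $a(x;\vv,\zz,\qq)-\ell(\vv,\zz,\qq)=\ip{-\BB^*\II^\star\pp+\CC\uu}{\vv}$ for every test triple. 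Evaluated at $w-x$, whose control component is $\vv_h-\uu$, this produces precisely $\ip{-\BB^*\II^\star\pp+\CC\uu}{\vv_h-\uu}$. Collecting the estimates gives
\begin{align*}
  \enorm{x-x_h}^2\lesssim \enorm{x-x_h}\,\enorm{x-w} + \ip{-\BB^*\II^\star\pp+\CC\uu}{\vv_h-\uu},
\end{align*}
and a Young inequality absorbs the factor $\enorm{x-x_h}$ on the right into the left-hand side, yielding $\enorm{x-x_h}^2\lesssim \enorm{x-w}^2+\ip{-\BB^*\II^\star\pp+\CC\uu}{\vv_h-\uu}$. Taking the infimum over all $w=(\vv_h,\ww_h,\qq_h)\in H$ gives the claim, since $\enorm{x-w}^2=\norm{\uu-\vv_h}{}^2+\norm{\yy-\ww_h}Y^2+\norm{\pp-\qq_h}{Y^\star}^2$. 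The only subtle point is the consistency identity: one must check that the residual terms genuinely cancel and that the surviving functional acts only on the control component (so that no extra terms in $\ww_h$ or $\qq_h$ appear), which is exactly the nonconforming-consistency contribution characteristic of variational inequalities and which vanishes in the unconstrained case where $X_{h,\mathrm{ad}}$ is a subspace.
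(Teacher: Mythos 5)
Your proposal is correct and follows essentially the same route as the paper: the Falk argument built on the coercivity and boundedness from Theorem~\ref{thm:constrained}, the consistency identity $a(\uu,\yy,\pp;\vv,\zz,\qq)=\ell(\vv,\zz,\qq)+\ip{-\BB^*\II^\star\pp+\CC\uu}{\vv}$ obtained from~\eqref{eq:model:a}--\eqref{eq:model:b}, the continuous inequality tested at the (admissible) discrete solution, the discrete inequality tested at the comparison element, and a final Young absorption. The only difference is cosmetic: you drop the nonpositive term $a(x;x-x_h)-\ell(x-x_h)$ explicitly, whereas the paper absorbs it by combining the two load-functional contributions into $\ell(x-\widetilde x_h)$ before invoking the consistency identity.
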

\begin{proof}
  For the convenience of the reader we include the proof. 
  First, from~\eqref{eq:model:a}--\eqref{eq:model:b} we see that the solution $(\uu,\yy,\pp)$ satisfies
  \begin{align}\label{eq:apriori:proof1}
    a(\uu,\yy,\pp;\vv,\ww,\qq) = \ell(\vv,\ww,\qq)+\ip{-\BB^*\II^\star\pp+\lambda\CC\uu}{\vv}_X \quad\forall (\vv,\ww,\qq)\in X\times Y\times Y^\star.
  \end{align}
  We use the short notation $\xx = (\uu,\yy,\pp)$, $\xx_h = (\uu_h,\yy_h,\pp_h)$ and $\widetilde\xx_h=(\vv_h,\zz_h,\qq_h)\in W_h$. 
  Second, Theorem~\ref{thm:constrained} shows that the bilinear form is coercive.
  Together with~\eqref{eq:apriori:proof1}, this leads to
  \begin{align*}
    \enorm{\xx-\xx_h}_\lambda^2
    &\lesssim a(\xx-\xx_h;\xx-\xx_h)
    \\
    &= a(\xx;\xx-\xx_h) -a(\xx_h;\xx-\widetilde\xx_h)-a(\xx_h;\widetilde\xx_h-\xx_h) \\
    &\leq \ell(\xx-\xx_h)-a(\xx_h;\xx-\widetilde\xx_h)-\ell(\widetilde\xx_h-\xx_h)
    \\
    &= \ell(\xx-\widetilde\xx_h) - a(\xx_h;\xx-\widetilde\xx_h).
  \end{align*}
  Using~\eqref{eq:apriori:proof1}, boundedness of the bilinear form and Young's inequality with parameter $\delta>0$ we see that
  \begin{align*}
    \ell(\xx-\widetilde\xx_h) - a(\xx_h;\xx-\widetilde\xx_h) &= a(\xx;\xx-\widetilde\xx_h) - \ip{-\BB^*\II^\star\pp+\lambda\CC\uu}{\uu-\vv_h}_X - a(\xx_h;\xx-\widetilde\xx_h)
    \\
    &= a(\xx-\xx_h;\xx-\widetilde\xx_h) + \ip{-\BB^*\II^\star\pp+\lambda\CC\uu}{\vv_h-\uu}_X 
    \\
    &\lesssim \delta^{-1}\enorm{\xx-\widetilde\xx_h}_\lambda^2 
    + \ip{-\BB^*\II^\star\pp+\lambda\CC\uu}{\vv_h-\uu}_X
    + \delta\enorm{\xx-\xx_h}_\lambda^2.
  \end{align*}
  Putting all the estimates together and subtracting the last term for $\delta$ sufficiently small proves the asserted quasi-optimality result.
\end{proof}

An immediate consequence of Theorem~\ref{thm:apriori} and norm equivalence from Lemma~\ref{lem:normequiv} is the following result.
\begin{corollary}\label{cor:apriori}
  With the assumptions and notations of Theorem~\ref{thm:apriori}, the estimate
  \begin{align*}
    &\lambda\norm{(\uu-\uu_h,\yy-\yy_h,\pp-\pp_h)}{X\times Y\times Y^\star}^2
    \\ &\qquad\lesssim 
    \inf_{(\vv_h,\zz_h,\qq_h)\in W}\left( \lambda^{-1}\norm{(\uu-\vv_h,\yy-\zz_h,\pp-\qq_h)}{X\times Y\times Y^\star}^2 + |{\ip{-\BB^*\II^\star\pp+\lambda\CC \uu}{\vv_h-\uu}_{X}}| \right)
  \end{align*}
  holds. 
\end{corollary}%

\begin{remark}
  If $X_{h,\mathrm{ad}}$ is not a subset of $\Xad$, i.e., if we consider a non-conforming discretization of the convex set $\Xad$, then the additional term
  \begin{align*}
    \inf_{\vv\in \Xad} |\ip{-\BB^*\II^\star \pp_h + \lambda\CC \uu_h}{\vv-\uu_h}_X|.
  \end{align*}
  enters on the right-hand side of the a priori estimate in Theorem~\ref{thm:apriori}.
\end{remark}

\subsubsection{A posteriori error analysis}
In this section we derive reliable and efficient a posteriori error estimators.
Let us recall the variational inequality from~\eqref{eq:model}, 
\begin{align*}
  \ip{-\BB^*\II^\star \pp + \lambda\CC\uu}{\vv-\uu}_X\geq 0 \quad\forall \vv\in \Xad.
\end{align*}
By our assumptions on the operator $\CC$ we have that $\ip{\cdot}\cdot_{\CC}:=\ip{\CC(\cdot)}{(\cdot)}_X$ defines an inner product on $X$ which induces the norm $\norm\cdot{\CC}$ that is equivalent to the norm on $X$. Thus, the variational inequality can be put as
\begin{align*}
  \ip{-\lambda^{-1}\CC^{-1}\BB^*\II^\star\pp +\uu}{\vv-\uu}_{\CC}\geq 0 \quad\forall \vv\in \Xad.
\end{align*}
This means that $\uu = \Pi_{\Xad}\lambda^{-1}\CC^{-1}\BB^*\II^\star\pp$ where $\Pi_{\Xad}\colon X\to \Xad$ denotes the projection with respect to the inner product $\ip\cdot\cdot_{\CC}$, see, e.g.~\cite[Theorem~5.2]{Brezis}.
Note that $\Pi_{\Xad}$ is a non linear operator unless $\Xad$ is a closed subspace. In the latter case it is the orthogonal projection. In either case, $\Pi_{\Xad}$ is non-expansive, i.e.,
\begin{align*}
  \norm{\Pi_{\Xad}(\vv)-\Pi_{\Xad}(\ww)}{\CC} \leq \norm{\vv-\ww}{\CC} \quad\forall \vv,\ww\in X. 
\end{align*}
Hence, by norm equivalence we also have that
\begin{align*}
  \norm{\Pi_{\Xad}(\vv)-\Pi_{\Xad}(\ww)}{X} \lesssim \norm{\vv-\ww}{X} \quad\forall \vv,\ww\in X. 
\end{align*}

\begin{theorem}
  Suppose that $1\eqsim \alpha\geq \alpha_0$, $\lambda^{-1}\eqsim \beta\geq\beta_0$, such that the results of Theorem~\ref{thm:constrained} hold true. 
  Let $(\uu,\yy,\pp) \in \Xad\times Y\times Y^\star$ denote the unique solution of~\eqref{eq:varin:constrained}. 
  Let $X_{h,\mathrm{ad}}\subset \Xad$ denote a non-empty closed convex subset and $Y_h\subset Y$, $Y_h^\star\subset Y^\star$ closed subspaces.
  Let $(\uu_h,\yy_h,\pp_h)\in X_{h,\mathrm{ad}}\times Y_h\times Y_h^\star$ be some arbitrary element and define
  \begin{align*}
    \widetilde\uu_h = \widetilde\uu_h(\pp_h) = \Pi_{\Xad} \lambda^{-1}\CC^{-1}\BB^*\II^\star\pp_h.
  \end{align*}
  Then, the estimator given by
  \begin{align*}
    \eta^2 = \alpha\norm{\LL\yy_h+\BB\uu_h-\ff}{Z}^2 + \beta\norm{\LL^\star\pp_h-\AA^*(\AA\II\yy_h-\zz_d)}{Z}^2 
    + \lambda\norm{\widetilde\uu_h-\uu_h}{X}^2
  \end{align*}
  is reliable and efficient in the sense that
  \begin{align*}
    \lambda\norm{(\uu-\uu_h,\yy-\yy_h,\pp-\pp_h)}{X\times Y \times Y^\star} \lesssim\eta &\lesssim \lambda^{-1/2}\norm{(\uu-\uu_h,\yy-\yy_h,\pp-\pp_h)}{X\times Y \times Y^\star}.
  \end{align*}
\end{theorem}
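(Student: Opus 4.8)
The plan is to prove the equivalence $\eta^2 \eqsim \norm{\uu-\uu_h}{}^2 + \norm{\yy-\yy_h}Y^2 + \norm{\pp-\pp_h}{Y^\star}^2$ by establishing the two inequalities separately. The key observation, which drives everything, is that by the variational inequality~\eqref{eq:model:c} together with the discussion preceding the theorem, the exact control satisfies $\uu = \Pi_{\Xad}\CC^{-1}\BB^*\II^\star\pp$, whereas the quantity $\widetilde\uu_h = \Pi_{\Xad}\CC^{-1}\BB^*\II^\star\pp_h$ is built from the \emph{discrete} adjoint state $\pp_h$. Thus $\widetilde\uu_h$ is a computable surrogate for $\uu$ that depends only on $\pp_h$, and the non-expansivity of $\Pi_{\Xad}$ (in the $L^2$ norm, up to a constant) immediately gives
\begin{align*}
  \norm{\uu-\widetilde\uu_h}{} = \norm{\Pi_{\Xad}\CC^{-1}\BB^*\II^\star\pp - \Pi_{\Xad}\CC^{-1}\BB^*\II^\star\pp_h}{} \lesssim \norm{\BB^*\II^\star(\pp-\pp_h)}{} \lesssim \norm{\pp-\pp_h}{Y^\star}.
\end{align*}

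For \textbf{reliability} ($\eta^2 \lesssim$ error), I would bound each of the three terms of $\eta^2$ by the error. The first two residual terms are the $L^2$ residuals of the state and adjoint equations; since the exact solution satisfies $\LL\yy+\BB\uu=\ff$ and $\LL^\star\pp=\AA^*(\AA\II\yy-\zz_d)$ exactly, each residual can be rewritten as the operator applied to the error, e.g.\ $\LL\yy_h+\BB\uu_h-\ff = \LL(\yy_h-\yy)+\BB(\uu_h-\uu)$, and boundedness of $\LL$ and $\BB$ yields the bound by $\norm{\yy-\yy_h}Y + \norm{\uu-\uu_h}{}$; similarly for the adjoint residual. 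For the third term I write $\widetilde\uu_h - \uu_h = (\widetilde\uu_h - \uu) + (\uu - \uu_h)$ and use the triangle inequality together with the non-expansivity estimate above, so that $\norm{\widetilde\uu_h-\uu_h}{} \lesssim \norm{\pp-\pp_h}{Y^\star} + \norm{\uu-\uu_h}{}$. Summing gives reliability.

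For \textbf{efficiency} ($\eta^2 \gtrsim$ error), the adjoint and state errors must be controlled by $\eta$. This is where the estimates already proved inside the coercivity argument of Theorem~\ref{thm:constrained} are reused verbatim: defining $\yy(\uu_h)$, $\pp(\uu_h)$ as the exact solutions with data determined by $\uu_h$, one has $\norm{\yy-\yy(\uu_h)}Y \lesssim \norm{\uu-\uu_h}{}$, and the triangle-inequality chain shows $\norm{\yy-\yy_h}Y \lesssim \norm{\LL\yy_h+\BB\uu_h-\ff}{} + \norm{\uu-\uu_h}{}$ and likewise $\norm{\pp-\pp_h}{Y^\star} \lesssim \norm{\LL^\star\pp_h-\AA^*(\AA\II\yy_h-\zz_d)}{} + \norm{\yy-\yy_h}Y$. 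The remaining point is to bound the control error $\norm{\uu-\uu_h}{}$ by $\eta$: using $\uu-\uu_h = (\uu - \widetilde\uu_h) + (\widetilde\uu_h - \uu_h)$, the first part is $\lesssim \norm{\pp-\pp_h}{Y^\star}$ by non-expansivity and the second is exactly the third estimator term.

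The \emph{main obstacle} is the circularity this introduces: the bound for $\norm{\uu-\uu_h}{}$ involves $\norm{\pp-\pp_h}{Y^\star}$, while the bound for $\norm{\pp-\pp_h}{Y^\star}$ involves $\norm{\yy-\yy_h}Y$, which in turn involves $\norm{\uu-\uu_h}{}$. The key to closing the argument is that the control-error bound $\norm{\uu-\widetilde\uu_h}{} \lesssim \norm{\pp-\pp_h}{Y^\star}$ feeds in with a constant independent of the others, so I would collect all three error quantities on the left, absorb the cross terms, and solve the resulting triangular system of inequalities; a Young's-inequality absorption (or simply the observation that the coupling is strictly triangular once $\widetilde\uu_h$ is used as the pivot) lets me hide the $\norm{\uu-\uu_h}{}$ and $\norm{\yy-\yy_h}Y$ contributions on the right-hand side and conclude that the total error is $\lesssim \eta^2$. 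Care is needed to ensure the implicit constants do not depend on the discretization, which follows from the uniform boundedness and bounded invertibility assumptions~\eqref{ass:L}--\eqref{ass:I} together with the fixed coercivity constant from Theorem~\ref{thm:constrained}.
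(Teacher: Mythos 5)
The direction $\eta^2\lesssim\norm{\uu-\uu_h}{}^2+\norm{\yy-\yy_h}{Y}^2+\norm{\pp-\pp_h}{Y^\star}^2$ in your proposal is correct and coincides with the paper's argument (rewrite the residuals via the exact equations, use boundedness of the operators, and use the Lipschitz continuity of $\Pi_{\Xad}$ to get $\norm{\uu-\widetilde\uu_h}{}\lesssim\norm{\pp-\pp_h}{Y^\star}$). The problem is the converse direction. You set up the cyclic chain $\norm{\yy-\yy_h}{Y}\lesssim\norm{\LL\yy_h+\BB\uu_h-\ff}{}+\norm{\uu-\uu_h}{}$, then $\norm{\pp-\pp_h}{Y^\star}\lesssim\norm{\LL^\star\pp_h-\AA^*(\AA\II\yy_h-\zz_d)}{}+\norm{\yy-\yy_h}{Y}$, then $\norm{\uu-\uu_h}{}\lesssim\norm{\pp-\pp_h}{Y^\star}+\norm{\widetilde\uu_h-\uu_h}{}$, and claim the system can be closed by absorption because it is ``strictly triangular.'' It is not triangular: it is a genuine cycle $\uu\to\yy\to\pp\to\uu$, and chaining the three estimates gives $\norm{\uu-\uu_h}{}\leq C_1C_2C_3\norm{\uu-\uu_h}{}+(\text{estimator terms})$, which can only be absorbed if the product $C_1C_2C_3$ of the (fixed, but uncontrolled) continuity and inverse-stability constants is below one. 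There is no reason for that; e.g.\ in the scalar model $\LL=\LL^\star=\mathrm{id}$, $\BB=-\beta$, $\AA=\alpha$, $\CC=\lambda$ the product behaves like $\alpha^2\beta^2/\lambda$, which is large for small $\lambda$. Young's inequality does not help here because the cross terms carry fixed constants, not free parameters. Note also that your chain never uses the adjointness relation~\eqref{ass:I} or the variational inequalities beyond the Lipschitz bound, so it would ``prove'' well-posedness even if the sign of $\AA^*\AA$ were flipped, in which case the statement is false; some structural input is unavoidable.

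The paper closes this direction differently: it applies the coercivity of the bilinear form $a$ from Theorem~\ref{thm:constrained} to the triple $(\widetilde\uu_h-\uu,\yy_h-\yy,\pp_h-\pp)$ (with $\widetilde\uu_h$, not $\uu_h$, in the first slot), which bounds the error by the two residuals evaluated at $(\widetilde\uu_h,\yy_h,\pp_h)$ plus the duality term $\ip{-\BB^*\II^\star(\pp_h-\pp)+\CC(\widetilde\uu_h-\uu)}{\widetilde\uu_h-\uu}$. The decisive step you are missing is that this duality term is $\leq 0$: this follows from testing the exact variational inequality $\ip{-\BB^*\II^\star\pp+\CC\uu}{\vv-\uu}\geq 0$ with $\vv=\widetilde\uu_h\in\Xad$ and the projection characterization $\ip{-\BB^*\II^\star\pp_h+\CC\widetilde\uu_h}{\vv-\widetilde\uu_h}\geq 0$ with $\vv=\uu\in\Xad$. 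Dropping the nonpositive term and then passing from $\widetilde\uu_h$ back to $\uu_h$ by the triangle inequality yields the bound. The coercivity of $a$ is exactly the device that breaks the circularity (its proof contains the sign condition $\ip{\BB^*\II^\star\pp(\uu)}{\uu}=-\ip{\AA^*\AA\II\yy(\uu)}{\II\yy(\uu)}\leq 0$), so you should invoke it rather than reassemble the individual stability estimates by hand.
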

\begin{proof}
  We start by noting that $\widetilde\uu_h \in \Xad$ satisfies
  \begin{align*}
    \ip{\lambda\CC\widetilde\uu_h -\BB^*\II^\star \pp_h}{\vv-\widetilde\uu_h}_X \geq 0 \quad\forall \vv\in \Xad.
  \end{align*}
  First, we show the efficiency estimate. By the triangle inequality, $\LL\yy + \BB\uu=\ff$, $\LL^\star\pp=\AA^*(\AA\II\yy-\zz_d)$,
  estimate~\eqref{eq:proofcoerc1}, Lipschitz continuity of the projection $\Pi_{\Xad}$ and Lemma~\ref{lem:normequiv}, we have that
  \begin{align*}
    \eta^2 &= \alpha\norm{\LL\yy_h+\BB\uu_h-\ff}{Z}^2 + \beta\norm{\LL^\star\pp_h-\AA^*(\AA\II\yy_h-\zz_d)}{Z}^2 
    + \lambda\norm{\widetilde\uu_h-\uu_h}{X}^2
    \\
    &= \alpha\norm{\LL(\yy_h-\yy)+\BB(\uu_h-\uu)}{Z}^2 + \beta\norm{\LL^\star(\pp_h-\pp)-\AA^*\AA\II(\yy_h-\yy)}{Z}^2 
    + \lambda\norm{\widetilde\uu_h-\uu_h}{X}^2
    \\
    &\eqsim \norm{\LL(\yy_h-\yy)+\BB(\uu_h-\uu)}{Z}^2 + \lambda^{-1}\norm{\LL^\star(\pp_h-\pp)-\AA^*\AA\II(\yy_h-\yy)}{Z}^2 + \lambda\norm{\widetilde\uu_h-\uu_h}{X}^2
    \\
    &\lesssim  \enorm{(\uu-\uu_h,\yy-\yy_h,\pp-\pp_h)}_\lambda^2 + \lambda\norm{\uu-\uu_h}{X}^2 + \lambda\norm{\widetilde\uu_h-\uu}{X}^2
    \\
    &\lesssim \lambda^{-1}\norm{(\uu-\uu_h,\yy-\yy_h,\pp-\pp_h)}{X\times Y\times Y^\star}^2
    \\ &\qquad\qquad+ \lambda
    \norm{\Pi_{\Xad}\lambda^{-1}\CC^{-1}\BB^*\II^\star\pp_h-\Pi_{\Xad}\lambda^{-1}\CC^{-1}\BB^*\II^\star\pp}{X}^2
    \\
    &\lesssim \lambda^{-1}\norm{(\uu-\uu_h,\yy-\yy_h,\pp-\pp_h)}{X\times Y\times Y^\star}^2 + \lambda^{-1}\norm{\pp-\pp_h}{Y^\star}^2
    \\&\eqsim \lambda^{-1}\norm{(\uu-\uu_h,\yy-\yy_h,\pp-\pp_h)}{X\times Y\times Y^\star}^2.
  \end{align*}

  For the lower bound, we apply Theorem~\ref{thm:constrained} to see that %
  \begin{align*}
    \lambda \enorm{(\uu-\widetilde\uu_h,\yy-\yy_h,\pp-\pp_h)}_\lambda^2 
    &\lesssim \lambda\norm{\LL(\yy_h-\yy)+\BB(\widetilde\uu_h-\uu)}Z^2 
    \\ 
    &\qquad + \norm{\LL^\star(\pp_h-\pp) -\AA^*\AA\II(\yy_h-\yy)}Z^2 
    \\
    &\qquad  + \lambda \ip{-\BB^*\II^\star(\pp_h-\pp)\lambda\CC(\widetilde\uu_h-\uu)}{\widetilde\uu_h-\uu}_X.
  \end{align*}
  Note that $\ip{-\BB^*\II^\star\pp+\lambda\CC\uu}{\widetilde\uu_h-\uu}_X\geq 0$ since $\widetilde\uu_h\in \Xad$, as well as 
  $\ip{-\BB^*\II^\star\pp_h + \lambda\CC\widetilde\uu_h}{\uu-\widetilde\uu_h}_X\geq 0$ since $\uu\in \Xad$. Thus, 
  \begin{align*}
    \ip{-\BB^*\II^\star(\pp_h-\pp)+\lambda\CC(\widetilde\uu_h-\uu)}{\widetilde\uu_h-\uu}_X  
    \leq \ip{-\BB^*\II^\star\pp_h+\lambda\CC\widetilde\uu_h}{\widetilde\uu_h-\uu}_X \leq 0.
  \end{align*}
  We conclude that 
  \begin{align*}
    \lambda \enorm{(\uu-\widetilde\uu_h,\yy-\yy_h,\pp-\pp_h)}_\lambda^2 &\lesssim \lambda\norm{\LL\yy_h+\BB\widetilde\uu_h-\ff}Z^2
    + \norm{\LL^\star\pp_h-\AA^*(\AA\II\yy-\zz_d)}Z^2
    \\
    &\lesssim \lambda\norm{\LL\yy_h+\BB\uu_h-\ff}Z^2
    + \norm{\LL^\star\pp_h-\AA^*(\AA\II\yy-\zz_d)}Z^2 
    \\&\qquad+ \lambda\norm{\widetilde\uu_h-\uu_h}X^2
    \lesssim \eta^2. 
  \end{align*}%
  The triangle inequality $\norm{\uu-\uu_h}{X}\leq \norm{\widetilde\uu_h-\uu_h}{X} + \norm{\uu-\widetilde\uu_h}{X}$ 
  and Lemma~\ref{lem:normequiv} then show %
  \begin{align*}
    \lambda^2\norm{(\uu-\uu_h,\yy-\yy_h,\pp-\pp_h)}{X\times Y\times Y^\star}^2 \lesssim \lambda \enorm{(\uu-\widetilde\uu_h,\yy-\yy_h,\pp-\pp_h)}_\lambda^2 \lesssim \eta^2,
  \end{align*}
which finishes the proof.
\end{proof}

For general space $X$ and non-empty convex closed sets $\Xad$ the evaluation of $\Pi_{\Xad}$ may not be easily accessible.
However, for the widely considered case of $X = \prod_{j=1}^m \boldsymbol{L}^2(\Omega_j')$ ($\Omega_1',\dots,\Omega_m'$ Lipschitz domains) together with box constraints 
there is an explicit representation of $\Pi_{\Xad}$:
Let $\aa,\bb$ be given vector-valued and essentially bounded functions,
$\aa<\bb$ (understood componentwise a.e.) and consider the box constraint control set
\begin{align*}
  \Xad^\mathrm{box} = \set{\vv\in X}{\aa\leq \vv \leq \bb \text{ a.e.}}.
\end{align*}
Suppose that $\CC$ is the identity operator. The projection on $\Xad^\mathrm{box}$,
characterized by $\ip{\Pi_{\Xad^\mathrm{box}}\vv-\vv}{\vv-\ww}\geq 0$ for all $\ww\in \Xad^\mathrm{box}$, has the explicit representation, see, e.g.,~\cite[Theorem~2.28]{Troeltzsch},
\begin{align*}
  \Pi_{\Xad^\mathrm{box}} \vv = \min\{\bb,\max\{\vv,\aa\}\} \quad\text{a.e.}
\end{align*}
The latter definition is understood componentwise.

\subsection{LSFEM for unconstrained optimal control problem}\label{sec:lsfem:unconstrained}
In this section we consider the unconstrained optimal control problem, i.e, $\Xad\subseteq X$ is a closed subspace.
This allows us to eliminate the control variable:
The variational inequality~\eqref{eq:model:c} is
\begin{align*}
  \ip{-\BB^*\II^\star\pp+\lambda\CC\uu}{\vv-\uu}_{X} &\geq 0 \quad\forall \vv\in \Xad.
\end{align*}
Taking $\vv = \ww+ \uu$ and $\vv=\uu-\ww$ with $\ww\in \Xad$ implies
\begin{align*}
  \ip{-\BB^*\II^\star\pp+\lambda\CC\uu}{\ww}_{X} &= 0 \quad\forall \ww\in \Xad.
\end{align*}
Therefore, $\uu = \Pi_{\Xad}\lambda^{-1}\CC^{-1}\BB^*\II^\star\pp$ where $\Pi_{\Xad}$ is the orthogonal projection with respect to the inner product $\ip{\cdot}\cdot_{\CC}$.
Problem~\eqref{eq:model} simplifies to
\begin{align*}
  \LL\yy &= \ff-\BB\Pi_{\Xad}\lambda^{-1}\CC^{-1}\BB^*\II^\star\pp, \\
  \LL^\star\pp &= \AA^*(\AA\II\yy-\zz_d).
\end{align*}
Define the least-squares functional
\begin{align*}
  G(\yy,\pp;\ff,\zz_d) = \alpha\norm{\LL\yy+\BB\Pi_{\Xad}\lambda^{-1}\CC^{-1}\BB^*\II^\star\pp-\ff}{Z}^2 + \beta\norm{\LL^\star\pp-\AA^*(\AA\II\yy-\zz_d)}{Z}^2,
\end{align*}
where $\alpha,\beta>0$, 
and consider the minimization problem
\begin{align}\label{eq:minUnconstrained}
  \min_{(\yy,\pp)\in Y\times Y^\star} G(\yy,\pp;\ff,\zz_d).
\end{align}
This problem has a unique solution if we can prove that the functional $(\yy,\pp)\mapsto G(\yy,\pp;0,0)$ is equivalent to a norm on $Y\times Y^\star$, see, e.g.,~\cite[Section~2.2.1 and Theorem~2.5]{BochevGunzberger09}.
We use the norm
\begin{align*}
  \enorm{(\yy,\pp)}_\lambda^2 := \enorm{(\Pi_{\Xad}\lambda^{-1}\CC^{-1}\BB^*\II^\star\pp,\yy,\pp)}_\lambda^2 \quad\forall (\yy,\pp)\in Y\times Y^\star.
\end{align*}
By following the proof of Lemma~\ref{lem:normequiv} one verifies the next result.
\begin{lemma}\label{lem:normequiv2}
  The estimates
  \begin{align*}
    \lambda^{1/2}\norm{(\yy,\pp)}{Y\times Y^\star} \lesssim \enorm{(\yy,\pp)}_\lambda \lesssim \lambda^{-1/2}\norm{\yy}{Y} + \lambda^{-1}\norm{\pp}{Y^\star}
  \end{align*}
  hold for all $(\yy,\pp)\in Y\times Y^\star$.
\end{lemma}
\begin{proof}
  The lower bound directly follows from the definitions and Lemma~\ref{lem:normequiv}. 
  For the upper bound we use $\uu = \Pi_{\Xad}\lambda^{-1}\CC^{-1}\BB^*\II^\star\pp$. In the proof of Lemma~\ref{lem:normequiv} it was shown that
  \begin{align*}
    \enorm{(\uu,\yy,\pp)}_\lambda^2 &\lesssim \norm{\uu}{X}^2 + \lambda^{-1}\norm{\yy}{Y}^2 + \lambda^{-1} \norm{\pp}{Y^\star}^2. 
  \end{align*}
  This together with $\norm{\uu}X\lesssim \lambda^{-1}\norm{\pp}{Y^\star}$ shows the upper bound.
\end{proof}

\begin{theorem}\label{thm:unconstrained}
  If $\alpha\eqsim 1$, $\beta\eqsim\lambda^{-1}$, then 
  \begin{align*}
    \enorm{(\yy,\pp)}_{\lambda}^2\eqsim G(\yy,\pp;0,0)  \quad\forall (\yy,\pp)\in Y\times Y^\star.
  \end{align*}
\end{theorem}
\begin{proof}
  Let $\alpha_0\eqsim 1$, $\beta_0\eqsim \lambda^{-1}$ denote the constants from Theorem~\ref{thm:constrained}.
  Note that we assume $\alpha\eqsim 1$ and $\beta\eqsim \lambda^{-1}$. Therefore, $\alpha\eqsim \alpha_0$, $\beta\eqsim \beta_0$.
  Let $\yy\in Y$ and $\pp\in Y^\star$ be given. Define $\uu = \Pi_{\Xad}\lambda^{-1}\CC^{-1}\BB^*\II^\star\pp$. 
  Note that then $\ip{-\BB^*\II^\star\pp+\lambda\CC\uu}{\vv}_X=0$ for any $\vv\in \Xad$. 
  We have that
  \begin{align*}
    G(\yy,\pp;0,0) &= \alpha\norm{\LL\yy+\BB\Pi_{{\Xad}}\lambda^{-1}\CC^{-1}\BB^*\II^\star\pp}{Z}^2 + \beta\norm{\LL^\star\pp-\AA^*\AA\II\yy}{Z}^2
    \\
    &\eqsim \alpha_0 \norm{\LL\yy+\BB\Pi_{{\Xad}}\lambda^{-1}\CC^{-1}\BB^*\II^\star\pp}{Z}^2 + \beta_0\norm{\LL^\star\pp-\AA^*\AA\II\yy}{Z}^2
    \\
    &= \alpha_0\norm{\LL\yy+\BB\uu}{Z}^2 + \beta_0\norm{\LL^\star\pp-\AA^*\AA\II\yy}{Z}^2 
    + \ip{-\BB^*\II^\star\pp+\lambda\CC\uu}{\vv}_X
    \\ &\eqsim \enorm{(\uu,\yy,\pp)}_\lambda^2 = \enorm{(\yy,\pp)}_\lambda^2.
  \end{align*}
  The last equivalence ``$\eqsim$'' follows from the proof of Theorem~\ref{thm:constrained}.
\end{proof}

The variational formulation of the convex minimization problem~\eqref{eq:minUnconstrained} reads: Find $(\yy,\pp)\in Y\times Y^\star$ such that
\begin{align}\label{eq:varForm:unconstrained}
  b(\yy,\pp;\ww,\qq) = m(\ww,\qq) \quad\forall (\ww,\qq)\in Y\times Y^\star, 
\end{align}
where the bilinear form $b\colon (Y\times Y^\star)^2\to \R$ and the functional $m\colon Y\times Y^\star\to\R$ are given by
\begin{align*}
  b(\yy,\pp;\ww,\qq) &= \alpha \ip{\LL\yy+\BB\Pi_{\Xad}\lambda^{-1}\CC^{-1}\BB^*\II^\star\pp}{\LL\ww+\BB\Pi_{\Xad}\lambda^{-1}\CC^{-1}\BB^*\II^\star\qq}_Z
  \\
  &\qquad + \beta\ip{\LL^\star\pp-\AA^*\AA\II\yy}{\LL^\star\qq-\AA^*\AA\II\ww}_Z
  \\
  m(\ww,\qq) &= \alpha\ip{\ff}{\LL\ww+\BB\Pi_{\Xad}\lambda^{-1}\CC^{-1}\BB^*\II^\star\qq}_Z + \beta\ip{-\AA^*\zz_d}{\LL^\star\qq-\AA^*\AA\II\ww}_Z.
\end{align*}
Given a closed subspace $Y_h\times Y_h^\star\subseteq Y\times Y^\star$ we consider the discrete formulation: 
Find $(\yy_h,\pp_h)\in Y_h\times Y_h^\star$ such that
\begin{align}\label{eq:varForm:unconstrained:discrete}
  b(\yy_h,\pp_h;\ww,\qq) = m(\ww,\qq) \quad\forall (\ww,\qq)\in Y_h\times Y_h^\star.
\end{align}

Thanks to the norm equivalence established in Theorem~\ref{thm:unconstrained} the next result follows immediately from the theory of LSFEMs, see, e.g.,~\cite[Chapter~3 and Theorem~2.5]{BochevGunzberger09}.
\begin{theorem}\label{thm:optimality:unconstrained}
  Let $\ff\in Z$, $\zz_d\in Z$ be given. 
  The problems~\eqref{eq:varForm:unconstrained} and~\eqref{eq:varForm:unconstrained:discrete} admit unique solutions $(\yy,\pp)\in Y\times Y^\star$ and $(\yy_h,\pp_h)\in Y_h\times Y_h^\star$. 
  They satisfy the quasi-optimality estimate %
  \begin{align*}
    \enorm{(\yy-\yy_h,\pp-\pp_h)}_\lambda \lesssim \inf_{(\zz_h,\qq_h)\in Y_h\times Y_h^\star} \enorm{(\yy-\zz_h,\pp-\qq_h)}_\lambda.
  \end{align*}%
\end{theorem}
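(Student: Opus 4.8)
The plan is to exploit the fact that, in the unconstrained case, $\Xad$ is a closed \emph{subspace}, so the metric projection $\Pi_\mathrm{ad}$ with respect to $\ip\cdot\cdot_{\CC}$ is the \emph{linear} orthogonal projection. Consequently the form $b$ introduced after~\eqref{eq:varForm:unconstrained} is genuinely bilinear, and one only has to verify that $b$ is symmetric, bounded and coercive on the Hilbert space $Y\times Y^\star$; existence, uniqueness and quasi-optimality then follow from the classical theory of least-squares/Galerkin methods via Lax--Milgram and C\'ea's lemma. All the analytic substance is already contained in the norm equivalence of Theorem~\ref{thm:unconstrained}, so no genuinely new estimate is needed.

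First I would note that $b$ is symmetric by inspection and that it reproduces the homogeneous least-squares functional on the diagonal, namely $b(\yy,\pp;\yy,\pp) = G(\yy,\pp;0,0)$. Theorem~\ref{thm:unconstrained} then supplies immediately the two-sided bound $b(\yy,\pp;\yy,\pp) \eqsim \norm{\yy}{Y}^2 + \norm{\pp}{Y^\star}^2$, which is precisely coercivity (lower bound) together with control of the diagonal (upper bound). Boundedness of $b$ off the diagonal is routine: each summand is an $L^2$ inner product of bounded linear images of its arguments, using that $\LL$, $\LL^\star$, $\BB$, $\BB^*$, $\AA$, $\AA^*$, $\II$ and $\CC^{-1}$ are bounded by the assumptions of Section~\ref{sec:ass}, and that $\Pi_\mathrm{ad}$ is bounded in the $L^2$ norm by norm equivalence. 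Cauchy--Schwarz then yields $|b(\yy,\pp;\ww,\qq)| \lesssim (\norm{\yy}{Y}^2+\norm{\pp}{Y^\star}^2)^{1/2}(\norm{\ww}{Y}^2+\norm{\qq}{Y^\star}^2)^{1/2}$. With $b$ bounded and coercive, the Lax--Milgram lemma applied on $Y\times Y^\star$ and on the closed subspace $Y_h\times Y_h^\star$ delivers the unique solutions $(\yy,\pp)$ and $(\yy_h,\pp_h)$.

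For the quasi-optimality estimate I would invoke Galerkin orthogonality: subtracting~\eqref{eq:varForm:unconstrained:discrete} from~\eqref{eq:varForm:unconstrained} tested against discrete functions gives $b(\yy-\yy_h,\pp-\pp_h;\ww_h,\qq_h)=0$ for all $(\ww_h,\qq_h)\in Y_h\times Y_h^\star$. Given an arbitrary $(\ww_h,\qq_h)\in Y_h\times Y_h^\star$, coercivity bounds $\norm{\yy-\yy_h}{Y}^2+\norm{\pp-\pp_h}{Y^\star}^2$ by $b(\yy-\yy_h,\pp-\pp_h;\yy-\yy_h,\pp-\pp_h)$, which by orthogonality equals $b(\yy-\yy_h,\pp-\pp_h;\yy-\ww_h,\pp-\qq_h)$; boundedness then turns the right-hand side into $(\norm{\yy-\yy_h}{Y}^2+\norm{\pp-\pp_h}{Y^\star}^2)^{1/2}(\norm{\yy-\ww_h}{Y}^2+\norm{\pp-\qq_h}{Y^\star}^2)^{1/2}$, and dividing through and taking the infimum over $(\ww_h,\qq_h)$ yields the asserted bound. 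I do not expect a genuine obstacle: the only point requiring a moment's care is the linearity and $L^2$-boundedness of $\Pi_\mathrm{ad}$, which is exactly the feature distinguishing the unconstrained (subspace) setting from the constrained one and which makes the standard symmetric coercive theory applicable.
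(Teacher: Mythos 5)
Your argument is exactly the standard least-squares/Galerkin reasoning that the paper invokes (it simply cites the norm equivalence of Theorem~\ref{thm:unconstrained} together with \cite[Chapter~3]{BochevGunzberger09} and gives no further details): symmetry and $b(\yy,\pp;\yy,\pp)=G(\yy,\pp;0,0)$ give coercivity and, via Cauchy--Schwarz, boundedness, whence Lax--Milgram and C\'ea's lemma yield existence, uniqueness and quasi-optimality. Your write-up is correct and merely fills in the routine steps the paper leaves implicit.
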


Another immediate result of the norm equivalence is the following a posteriori error estimate.
\begin{corollary}
  Let $(\yy,\pp)\in Y\times Y^\star$ denote the unique solution of~\eqref{eq:varForm:unconstrained} and let $Y_h\times Y_h^\star\subset Y\times Y^\star$ denote a closed subspace. 
  The least-squares functional defines an efficient and reliable a posteriori error estimator, i.e., 
  \begin{align*}
    \enorm{(\yy-\yy_h,\pp-\pp_h)}_\lambda^2 \eqsim G(\yy_h,\pp_h;\ff,\zz_d) \quad\text{for any } (\yy_h,\pp_h)\in Y_h\times Y_h^\star.
  \end{align*}
\end{corollary}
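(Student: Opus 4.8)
The plan is to reduce the functional $G(\yy_h,\pp_h;\ff,\zz_d)$ to the homogeneous functional $G(\cdot,\cdot;0,0)$ evaluated at the error, and then to invoke the norm equivalence established in Theorem~\ref{thm:unconstrained}. First I would record that the exact solution satisfies its two defining equations, so that $\ff = \LL\yy+\BB\Pi_\mathrm{ad}\CC^{-1}\BB^*\II^\star\pp$ and $\AA^*\zz_d = \AA^*\AA\II\yy-\LL^\star\pp$; in particular $G(\yy,\pp;\ff,\zz_d)=0$.

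The decisive structural point is that, in the unconstrained setting, $\Xad$ is a closed subspace, so $\Pi_\mathrm{ad}$ is the orthogonal projection with respect to the inner product $\ip{\cdot}{\cdot}_{\CC}$ and hence a \emph{linear} operator. Substituting the two data identities into the residuals defining $G(\yy_h,\pp_h;\ff,\zz_d)$ and using this linearity, the first residual becomes $\LL(\yy_h-\yy)+\BB\Pi_\mathrm{ad}\CC^{-1}\BB^*\II^\star(\pp_h-\pp)$ and the second becomes $\LL^\star(\pp_h-\pp)-\AA^*\AA\II(\yy_h-\yy)$. Comparing with the definition of the homogeneous functional, this yields the exact identity
\begin{align*}
  G(\yy_h,\pp_h;\ff,\zz_d) = G(\yy-\yy_h,\pp-\pp_h;0,0).
\end{align*}

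Finally, applying Theorem~\ref{thm:unconstrained} to the pair $(\yy-\yy_h,\pp-\pp_h)\in Y\times Y^\star$ gives
\begin{align*}
  G(\yy-\yy_h,\pp-\pp_h;0,0)\eqsim \norm{\yy-\yy_h}{Y}^2+\norm{\pp-\pp_h}{Y^\star}^2,
\end{align*}
which is precisely the asserted equivalence.

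I expect no genuine obstacle beyond recognizing and correctly exploiting the linearity of $\Pi_\mathrm{ad}$: this is the single place where the subspace hypothesis is used, and it is exactly what collapses the inhomogeneous least-squares functional into its homogeneous counterpart evaluated at the error. In the constrained case the projection $\Pi_{\Xad}$ is only nonexpansive, the displayed identity fails, and one is forced into the more elaborate argument used for the estimator $\eta$; here, by contrast, the result is immediate.
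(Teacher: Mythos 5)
Your argument is correct and is precisely the standard reasoning the paper has in mind: the corollary is stated as an ``immediate result of the norm equivalence'' with no written proof, and your identity $G(\yy_h,\pp_h;\ff,\zz_d)=G(\yy-\yy_h,\pp-\pp_h;0,0)$ --- valid because the exact solution annihilates both residuals and $\Pi_\mathrm{ad}$ is linear in the subspace setting --- combined with Theorem~\ref{thm:unconstrained} is exactly the intended filling-in. Your closing remark correctly identifies why this collapse fails in the constrained case and why the estimator $\eta$ there requires the separate, more elaborate argument.
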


\begin{remark}
  To obtain an approximation of the control $\uu$ given that $(\yy_h,\pp_h)\in Y_h\times Y_h^\star$ is the solution of~\eqref{eq:varForm:unconstrained:discrete}, we define
  \begin{align*}
    \uu_h = \Pi_{\Xad}\lambda^{-1}\CC^{-1}\BB^*\II^\star\pp_h.
  \end{align*}
  Since $\uu = \Pi_{\Xad}\lambda^{-1}\CC^{-1} \BB^*\II^\star\pp$ we get that
  \begin{align*}
    \lambda^{1/2}\norm{\uu-\uu_h}{X} \leq \enorm{(\yy-\yy_h,\pp-\pp_h)}_\lambda \lesssim \inf_{(\zz_h,\qq_h)\in Y_h\times Y_h^\star} \enorm{(\yy-\zz_h,\pp-\qq_h)}_\lambda
  \end{align*}
  by the boundedness of the involved operators and Theorem~\ref{thm:optimality:unconstrained}.
\end{remark}

\section{Examples}\label{sec:examples}
In this section we present various model problems where the framework developed in Section~\ref{sec:main} can be applied. 

\subsection{Second-order PDE}
Let $\Omega\subseteq \R^d$, $d\geq 2$ denote a bounded Lipschitz domain. 
Given $f\in L^2(\Omega;\R)$, $\ff\in L^2(\Omega;\R^d)$ we consider the first-order formulation of a general second-order PDE, 
\begin{subequations}\label{eq:secondOrderPDE}
  \begin{align}
    \div\ssigma + cy &= f, \\
    \nabla y - \bb y + \Amat^{-1}\ssigma &= \ff, \\
    y|_{\partial \Omega} &= 0,
  \end{align}
\end{subequations}
where $\Amat\in L^\infty(\Omega;\R^{d\times d})$ is a symmetric, uniformly positive matrix, $\bb\in L^\infty(\Omega;\R^d)$, $c\in L^\infty(\Omega;\R)$ such that
\begin{align*}
  L^\infty(\Omega)\ni\frac12\div(\Amat\bb) + c \geq 0.
\end{align*}
We can also include the Helmholtz problem, where $\Amat$ is the identity, $\bb=0$, and $c<0$ is a constant such that it is not an eigenvalue of
\begin{align*}
  -\Delta y = \mu y, \quad y|_{\partial\Omega} = 0. 
\end{align*}

We note that the problem admits a unique solution $(y,\ssigma)\in H_0^1(\Omega)\times \Hdivset\Omega$. 
This can be seen by solving the second equation in~\eqref{eq:secondOrderPDE} for $\ssigma$ and then replacing $\ssigma$ in the first equation. 
The resulting second-order PDE is
\begin{align*}
  -\div(\Amat\nabla y) + \div(\Amat\bb y) + cy = f -\div(\Amat\ff), \quad y|_{\partial\Omega} = 0.
\end{align*}
Unique solvability can be shown by standard arguments and we omit the details here.

The choices
\begin{itemize}
  \item $X = H=L^2(\Omega;\R)$, $\Xad = \set{v\in X}{a\leq v\leq b \text{ a.e. in }\Omega}$ with $a<b$,
  \item $Y = Y^\star = H_0^1(\Omega)\times \Hdivset\Omega$,
  \item $Z = L^2(\Omega;\R)\times L^2(\Omega;\R^d)$
  \item $\II(y,\ssigma) = (y,\ssigma)$, $\II^\star(p,\xxi) = (p,\xxi)$,
  \item $\AA(y,\ssigma) = y$,
  \item $\BB u = -(u,0)$,
  \item $\CC u = \lambda u$ with $\lambda>0$,
\end{itemize}
give rise to the following optimal control problem: Given $z_d\in L^2(\Omega;\R)$ and $(f,\ff)\in Z$ find the minimizer of
\begin{align*}
  \min \norm{y-z_d}{L^2(\Omega)}^2 + &\lambda\norm{u}{L^2(\Omega)}^2 \quad\text{subject to}\\
    \div\ssigma + cy &= f + u, \\
    \nabla y - \bb y + \Amat^{-1}\ssigma &= \ff, \\
    y|_{\partial \Omega} &= 0,
\end{align*}
The corresponding least-squares operators are given by
\begin{align*}
  \LL\yy &= \left(\div\ssigma + c y, \nabla y - \bb y + \Amat^{-1}\ssigma\right),  \quad
  \LL^\star\pp = \left(-\div\xxi - \bb\cdot\xxi + c p, -\nabla p + \Amat^{-1}\xxi\right)
\end{align*}
for $\yy = (y,\ssigma)\in Y$, $\pp=(p,\xxi)\in Y^\star$. In particular, integration by parts proves that
\begin{align*}
  \ip{\LL\yy}{\II^\star\pp} = \ip{\II\yy}{\LL^\star\pp} \quad\forall \yy\in Y, \pp\in Y^\star.
\end{align*}
We note that $\ran(\LL) = Z = \ran(\LL^\star)$, $\ker(\LL)=\{0\} = \ker(\LL^\star)$ which can be seen by following the analysis of, e.g.,~\cite{CaiLazarovManteuffelMcCormickPart1}, so that this example fits the abstract framework from Section~\ref{sec:main}. 

\subsection{Stokes problem}
Let $\Omega\subseteq \R^d$, $d=2,3$ denote a bounded Lipschitz domain. We consider the Stokes equations
\begin{align*}
  -\boldsymbol\Delta \yy + \nabla p &= \ff, \\
  \div\yy &= 0, \\
  \yy|_{\partial \Omega} &=0.
\end{align*}
For a suitable least-squares formulation we use the pseudostress tensor $\MM = \boldsymbol\nabla \yy - p\Imat$, see~\cite[Section~3.2]{CaiLeeWang04} for the definition and analysis of a similar formulation, where $\Imat$ denotes the identity tensor.
Here, the vector gradient $\boldsymbol\nabla\vv$ is the tensor where the $j$-th row is the gradient of the $j$-th component of $\vv$ and $\boldsymbol\Delta\vv$ is the tensor where the $j$-th row is the Laplacian of the $j$-th component of $\vv$.
In particular, by noting that $\tr\MM = \div\yy-d p = -d p$ we can eliminate the pressure variable $p$ from the system. Let $\Dev\MM = \MM-\tfrac1d\Imat\tr\MM$ denote the deviatoric part of $\MM\in L^2(\Omega;\R^{d\times d})$, $\Pi_\Omega$ the $L^2(\Omega;\R)$ orthogonal projection onto constants, and $\Div$ the row-wise divergence operator. 
Using the spaces 
\begin{align*}
  Y = Y^\star = \HH_0^1(\Omega)\times \HDivset\Omega = \HH_0^1(\Omega)\times \set{\NN\in L^2(\Omega;\R^{d\times d})}{\Div\NN\in L^2(\Omega;\R^d)}
\end{align*}
we define the least-squares operators
\begin{align}\label{eq:stokes:op}
  \LL(\yy,\MM) = \LL^\star(\yy,\MM) = (-\Div\MM,\boldsymbol\nabla\yy-\Dev\MM - \frac1d\Imat\Pi_\Omega\tr\MM),
\end{align}
and $\II(\yy,\MM) = \II^\star(\yy,\MM) = (\yy,\MM)$. One verifies that these operators are linear and bounded in the canonical norms.
We note that $\ip{\MM}{\NN}_{L^2(\Omega)} = \int_\Omega \MM:\NN\,\di x$, where $\MM:\NN$ denotes the Frobenius inner product of two tensors $\MM,\NN \in \HDivset\Omega$. Then, $\ip{\Dev\MM}{\Imat}_{L^2(\Omega)} = 0$ and $\ip{\Dev\MM}{\NN}_{L^2(\Omega)} = \ip{\MM}{\Dev\NN}_{L^2(\Omega)}$ by a straightforward calculation.
To see~\eqref{ass:I} set $Z = L^2(\Omega;\R^d)\times L^2(\Omega;\R^{d\times d})$ and 
let $(\yy,\MM),(\pp,\NN)\in Y=Y^\star$ be given. Integration by parts yields
\begin{align*}
  \ip{\LL(\yy,\MM)}{\II^\star(\pp,\NN)}_Z &= \ip{-\div\MM}{\pp}_{L^2(\Omega)} + \ip{\boldsymbol\nabla\yy-\Dev\MM - \frac1d\Imat\Pi_\Omega\tr\MM}{\NN}_{L^2(\Omega)}
  \\
  &= \ip{\MM}{\boldsymbol\nabla \pp}_{L^2(\Omega)} + \ip{\yy}{-\Div\NN}_{L^2(\Omega)} + \ip{\MM}{-\Dev\MM}_{L^2(\Omega)} 
  \\
  &\qquad - \frac1d \ip{\Pi_\Omega\tr\MM}{\tr\NN}_{L^2(\Omega)}
  \\
  &= \ip{\yy}{-\Div\NN}_{L^2(\Omega)} + \ip{\MM}{\boldsymbol\nabla\pp-\Dev\NN-\frac1d\Imat\Pi_\Omega\tr\NN}_{L^2(\Omega)}
  \\
  &= \ip{\II(\yy,\MM)}{\LL^\star(\pp,\NN)}_Z.
\end{align*}

Assumption~\eqref{ass:L} is shown in Theorem~\ref{thm:stokes} below. 
For the optimal control problem of interest we further have
\begin{itemize}
  \item $X = H = L^2(\Omega;\R^d)$, $\Xad = \set{\vv\in X}{\aa\leq \vv\leq \bb \text{ a.e.}}$ with $\aa,\bb \in \R^d$,
  \item $\AA(\yy,\MM) = \yy$, $\BB\uu = -(\uu,0)$, $\CC\uu = \uu$.
\end{itemize}
Thus, we have the optimal control problem: Given $(\ff,0)\in Z$, $\zz_d\in H$ find the solution $\uu\in \Xad$ of
\begin{align}\label{eq:stokes}
\begin{split}
  \min \norm{\yy-\zz_d}{L^2(\Omega)}^2 + &\lambda \norm{\uu}{L^2(\Omega)}^2 \text{ subject to}
  \\
  -\Div\MM &= \ff+\uu, \\
  \boldsymbol\nabla \yy - \Dev\MM - \frac1d\Imat\Pi_\Omega\tr\MM &= 0, \\
  \yy|_{\partial \Omega} &= 0. 
\end{split}
\end{align}
We stress that the equations above are a formulation of the Stokes problem 
\begin{align*}
  -\boldsymbol\Delta \yy + \nabla p &= \ff+\uu, \quad \div\yy = 0, \quad\text{and } \Pi_\Omega p = 0.
\end{align*}

The proof of the next theorem uses unique solvability of the weak formulation of the Stokes problem, see, e.g.,~\cite[Theorem~8.2.1]{BoffiBrezziFortin}.
\begin{theorem}\label{thm:stokes}
  Let $\LL$ denote the operator from~\eqref{eq:stokes:op}. We have that $\ker\LL = \{0\}$ and $\ran\LL = Z$. 
\end{theorem}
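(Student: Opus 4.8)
The plan is to establish the two claims $\ker\LL=\{0\}$ and $\ran\LL=Z$ separately, using the fact that the least-squares formulation~\eqref{eq:stokes:op} is nothing but a rewriting of the Stokes system with the pressure eliminated through the trace of the pseudostress. Since $\LL=\LL^\star$ here, proving $\ker\LL=\{0\}$ and surjectivity for $\LL$ simultaneously delivers assumption~\eqref{ass:L} in full.

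First I would treat injectivity. Suppose $\LL(\yy,\MM)=(0,0)$ for some $(\yy,\MM)\in Y$. The first component gives $\Div\MM=0$, and the second gives $\boldsymbol\nabla\yy=\Dev\MM+\frac1d\Imat\Pi_\Omega\tr\MM$. The key algebraic observation is that taking the trace of the second equation yields $\div\yy=\tr(\Dev\MM)+\Pi_\Omega\tr\MM=\Pi_\Omega\tr\MM$, since $\Dev\MM$ is trace-free; because $\yy\in\HH_0^1(\Omega)$, integrating $\div\yy$ over $\Omega$ shows $\Pi_\Omega\tr\MM=0$, hence $\div\yy=0$ and $\boldsymbol\nabla\yy=\Dev\MM$. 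I would then recover a pressure by setting $p:=-\tfrac1d\tr\MM$, so that $\MM=\boldsymbol\nabla\yy-p\Imat$ and $\Div\MM=0$ becomes $\boldsymbol\Delta\yy-\nabla p=0$. Together with $\div\yy=0$ and the homogeneous boundary condition, this is the homogeneous Stokes problem, whose only solution is $\yy=0$ and $p=$ const; the normalization $\Pi_\Omega\tr\MM=0$ forces $p=0$, whence $\MM=0$. This establishes $\ker\LL=\{0\}$.

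For surjectivity I would argue that for arbitrary data $(\g,\GG)\in Z=L^2(\Omega;\R^d)\times L^2(\Omega;\R^{d\times d})$ the system $-\Div\MM=\g$, $\boldsymbol\nabla\yy-\Dev\MM-\frac1d\Imat\Pi_\Omega\tr\MM=\GG$ admits a solution $(\yy,\MM)\in Y$. The natural route is to reduce to a well-posed Stokes-type problem: one splits $\GG$ into its deviatoric and spherical parts, uses the spherical part to fix $\Pi_\Omega\tr\MM$, and then seeks $(\yy,\MM)$ solving a Stokes problem with the given right-hand sides, invoking the classical well-posedness of the velocity-pseudostress formulation as in~\cite{CaiLeeWang04}. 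Equivalently, and perhaps more cleanly, I would combine injectivity with the closed range theorem: since $\LL$ is bounded and $\LL=\LL^\star$ is its own adjoint in the sense of~\eqref{ass:I}, $\ran\LL$ is dense (because its orthogonal complement lies in $\ker\LL^\star=\{0\}$), so it remains only to show $\ran\LL$ is closed, i.e., to prove an a~priori estimate $\norm{(\yy,\MM)}{Y}\lesssim\norm{\LL(\yy,\MM)}{}$.

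The main obstacle is precisely this a~priori estimate, which underlies closed range and is where the analysis of~\cite{CaiLeeWang04} enters. The delicate point is controlling $\norm{\MM}{\HDivset\Omega}$ and $\norm{\yy}{\HH_0^1(\Omega)}$ by the two $L^2$ residuals; the norm on $\MM$ must be recovered from $\Dev\MM$ plus the projected trace $\Pi_\Omega\tr\MM$, and the full trace $\tr\MM$ (equivalently the pressure) is only controlled after using the divergence constraint and a generalized Poincar\'e or inf--sup argument for the pressure, exactly as in the pseudostress analysis of the Stokes problem. I would therefore lean on the estimates of~\cite{CaiLeeWang04}, adapted to the pressure-eliminated form~\eqref{eq:stokes:op}, to close the argument.
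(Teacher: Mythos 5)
Your injectivity argument is correct and is essentially the paper's: take the trace of the second equation, use $\yy\in\HH_0^1(\Omega)$ to force $\Pi_\Omega\tr\MM=0$, recover $p=-\tfrac1d\tr\MM$, and reduce to the homogeneous Stokes problem with normalized pressure. No issues there.

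The surjectivity half is where the proposal has a genuine gap, in both of the routes you offer. The second route does not work as stated: the identification of $(\ran\LL)^\perp$ with $\ker\LL^\star$ requires the Hilbert-space adjoint of $\LL\colon Y\to Z$, whereas~\eqref{ass:I} only gives a \emph{formal} adjoint relation through the embeddings $\II,\II^\star$, whose ranges are proper (dense) subspaces of $Z$; from $\ip{\LL\yy}{\zz}=0$ for all $\yy$ you cannot write $\zz=\II^\star\pp$ and invoke $\ker\LL^\star=\{0\}$. Moreover the closedness of the range is exactly the a~priori estimate $\norm{(\yy,\MM)}{Y}\lesssim\norm{\LL(\yy,\MM)}{}$, which you explicitly defer to~\cite{CaiLeeWang04} without adapting it to the pressure-eliminated operator~\eqref{eq:stokes:op} --- but that estimate is essentially the statement being proved, so this is close to circular. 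The first route is the right idea but stops short of the step that makes it work: the paper never needs coercivity of the least-squares functional for surjectivity. Instead, given $(\ff,\FF)\in Z$, it solves the \emph{standard velocity--pressure} Stokes problem with modified data $-\Delta\yy+\nabla p=\ff-\Div\FF$, $\div\yy=(1-\Pi_\Omega)\tr\FF$, $\Pi_\Omega p=0$ (note $\Div\FF\in\HH^{-1}(\Omega)$ and the divergence datum has zero mean, so this is well posed), and then \emph{defines} $\MM:=\boldsymbol\nabla\yy-p\Imat-\FF$. A trace computation gives $\Pi_\Omega\tr\MM=-\Pi_\Omega\tr\FF$ and hence the second residual equation, and the distributional identity $\dual{\Div\MM}{\vv}=-\ip{\ff}{\vv}$ for $\vv\in\DD(\Omega)$ shows $\Div\MM=-\ff\in L^2(\Omega;\R^d)$, so $(\yy,\MM)\in Y$. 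Your sketch of ``splitting $\GG$ into deviatoric and spherical parts and fixing $\Pi_\Omega\tr\MM$'' gestures at this but does not produce the preimage; without the explicit construction of $\MM$ and the verification that $\Div\MM\in L^2$, the surjectivity claim is not established.
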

\begin{proof}
  Let $(\ff,\FF)\in Z$ be given. We prove surjectivity of $\LL$. To that end, let $(\yy,p)\in \HH_0^1(\Omega)\times L^2(\Omega;\R)$ with $\Pi_\Omega p=0$ denote the (unique) weak solution of the Stokes equations
  \begin{align*}
    -\Delta \yy + \nabla p &= \ff-\Div\FF
    \\
    \div\yy &= (1-\Pi_\Omega)\tr\FF.
  \end{align*}
  Then, define $\MM:= \boldsymbol\nabla\yy-p\Imat -\FF\in L^2(\Omega;\R^{d\times d})$. Taking the trace gives
  \begin{align*}
    \tr\MM = \div\yy-d p -\tr\FF = (1-\Pi_\Omega)\tr\FF-dp-\tr\FF = -\Pi_\Omega\tr\FF-dp.
  \end{align*}
  Applying $\Pi_\Omega$ we get $\Pi_\Omega\tr\MM = -\Pi_\Omega\tr\FF$. Putting the last identities together we see that
  \begin{align*}
    \Dev\MM = \boldsymbol\nabla\yy-p\Imat-\FF - \frac1d\Imat\Pi_\Omega\tr\MM + p\Imat,
  \end{align*}
  or equivalently, $\boldsymbol\nabla\yy-\Dev\MM -\tfrac1d\Imat\Pi_\Omega\tr\MM = \FF$.
  We have to show that $\Div\MM\in L^2(\Omega;\R^d)$ and $\Div\MM=-\ff$. 
  Recall that from the weak formulation of the Stokes problem we have that
  \begin{align*}
    \ip{\boldsymbol\nabla \yy}{\boldsymbol\nabla\vv}_{L^2(\Omega)} -\ip{p}{\div\vv}_{L^2(\Omega)} = \ip{\ff}{\vv}_{L^2(\Omega)}+\ip{\FF}{\boldsymbol\nabla \vv}_{L^2(\Omega)} \quad\forall \vv\in \HH_0^1(\Omega).
  \end{align*}
  Using this identity we get for any $\vv \in \DD(\Omega)\subseteq \HH_0^1(\Omega)$ (smooth test functions with vanishing traces)
  \begin{align*}
    \dual{\Div\MM}{\vv} &= -\ip{\MM}{\boldsymbol\nabla\vv}_{L^2(\Omega)} = -\ip{\boldsymbol\nabla\yy}{\boldsymbol\nabla\vv}_{L^2(\Omega)} + \ip{p}{\div\vv}_{L^2(\Omega)} + \ip{\FF}{\boldsymbol\nabla \vv}_{L^2(\Omega)} 
    \\
    &= \ip{-\ff}{\vv}_{L^2(\Omega)}.
  \end{align*}
  Due to density of $\DD(\Omega)$ in $L^2(\Omega;\R^d)$ this proves that $\Div\MM = -\ff$. Thus, we have proven that for given $(\ff,\FF)\in Z$ there exists $(\yy,\MM)\in Y$ with $\LL(\yy,\MM) = (\ff,\FF)$.

  To show injectivity of $\LL$ let $(\yy,\MM)\in Y$ with $\LL(\yy,\MM)=0$ be given, i.e.,
  \begin{align*}
    -\Div\MM &= 0, \\
    \boldsymbol\nabla\yy -\Dev\MM - \frac1d\Imat\Pi_\Omega\tr\MM &= 0. 
  \end{align*}
  Taking the trace and then applying $\Pi_\Omega$ to the second equation shows $\Pi_\Omega\tr\MM = 0$. We then have
  \begin{align*}
    \boldsymbol\nabla\yy-\Dev\MM = 0.
  \end{align*}
  Taking the trace of the latter identity we see that $\div\yy = 0$. Setting $p:=-\tfrac1d\tr\MM$ and eliminating $\MM$ in the equation $\Div\MM = 0$ we see that $(\yy,p)$ satisfy the Stokes equations
  \begin{align*}
    -\Delta \yy + \nabla p &=0, \quad \div\yy = 0, \quad \Pi_\Omega p = 0. 
  \end{align*}
  By unique solvability we conclude that $\yy = 0$. Furthermore, we conclude $\Dev\MM = 0$, that is $\MM = \tfrac1d\Imat\tr\MM = -p\Imat = 0$. This finishes the proof.
\end{proof}

\subsection{Maxwell equation}
We consider the real 3D Maxwell problem: 
Let $\Omega$ denote a simply connected Lipschitz domain.
Given $\ff_1,\ff_2\in L^2(\Omega;\R^3)$, $c\in L^\infty(\Omega;\R)$, find $(\yy,\ssigma)\in \HcurlsetZero\Omega\times \Hcurlset\Omega$ such that
\begin{align}
\begin{split}\label{eq:maxwell}
  \ccurl\ssigma + c\yy &= \ff_1, \\
  \ccurl\yy - \ssigma &= \ff_2.
\end{split}
\end{align}
This is a first-order reformulation of the PDE
\begin{align}\label{eq:maxwellPDE}
  \ccurl\ccurl\yy + c\yy = \ff_1 + \ccurl\ff_2, \quad \yy\times\normal|_{\partial \Omega} = 0. 
\end{align}
Here, the spaces are given by
\begin{align*}
  \Hcurlset\Omega &:= \set{\vv\in L^2(\Omega;\R^3)}{\ccurl\vv \in L^2(\Omega;\R^3)}, 
  \quad \\
  \HcurlsetZero\Omega &= \set{\vv\in \Hcurlset\Omega}{\vv\times\normal|_{\partial \Omega} = 0},
\end{align*}
where $\normal$ denotes the normal vector field on $\partial\Omega$. Moreover, we assume that $c$ is strictly positive, or if $c<0$, that $c$ is a constant and is not an eigenvalue of the cavity problem, see, e.g.~\cite[Section~2.4]{CCStorn18} and~\cite[Section~4.7]{Monk03}.
We note that $\II(\yy,\ssigma) = (\yy,\ssigma)$, $\II^\star=\II$, 
\begin{align*}
  \LL(\yy,\ssigma) = \left( \ccurl\ssigma + c\yy, \ccurl\yy-\ssigma\right)
  \quad\forall (\yy,\ssigma)\in Y =Y^\star := \HcurlsetZero\Omega\times\Hcurlset\Omega,
\end{align*}
$\LL^\star=\LL$, and $Z=L^2(\Omega;\R^3)\times L^2(\Omega;\R^3)$.
To see~\eqref{ass:I} integration by parts yields for all $(\yy,\ssigma),(\zz,\ttau)\in Y$,
\begin{align*}
  \ip{\LL(\yy,\ssigma)}{\II^\star(\zz,\ttau)}_Z &= \ip{\ccurl\ssigma + c\yy}{\zz}_{L^2(\Omega)} + \ip{\ccurl\yy - \ssigma}{\ttau}_{L^2(\Omega)}
  \\
  &= \ip{\ssigma}{\ccurl\zz}_{L^2(\Omega)} + \ip{\yy}{c\zz}_{L^2(\Omega)} + \ip{\yy}{\ccurl\ttau}_{L^2(\Omega)} - \ip{\ssigma}{\ttau}_{L^2(\Omega)}
  \\
  &= \ip{\yy}{\ccurl\ttau + c\zz}_{L^2(\Omega)} + \ip{\ssigma}{\ccurl\zz-\ttau}_{L^2(\Omega)} 
  \\ &= \ip{\II(\yy,\ssigma)}{\LL^\star(\zz,\ttau)}_{Z}.
\end{align*}
Surjectivity resp. injectivity of $\LL$ can be seen by analyzing problem~\eqref{eq:maxwell} or, equivalently,~\eqref{eq:maxwellPDE}.
In the case $\ff_2=0$ problem~\eqref{eq:maxwellPDE} has been studied in full length in~\cite[Ch.4]{Monk03}. 
There it is also mentioned that general right-hand side ($\ff_2\neq 0$ in our setting) can be included.
We provide a proof for completeness. 
\begin{proposition}
  We have that $\ran(\LL)=Z$ and $\ker(\LL)=\{0\}$.
\end{proposition}
\begin{proof}
  It is straightforward to verify that~\eqref{eq:maxwellPDE} and~\eqref{eq:maxwell} are equivalent. Thus, it suffices to analyze the weak form of~\eqref{eq:maxwellPDE}. 
  The variational formulation of~\eqref{eq:maxwellPDE} is: Find $\yy\in \HcurlsetZero\Omega$ such that
  \begin{align}\label{eq:maxwellVar}
    \ip{\ccurl\yy}{\ccurl\zz}_{L^2(\Omega)} + \ip{c\yy}{\zz}_{L^2(\Omega)} = \ip{\ff_1}{\zz}_{L^2(\Omega)} + \ip{\ff_2}{\ccurl\zz}_{L^2(\Omega)}
  \end{align}
  for all $\zz\in \HcurlsetZero\Omega$. 
  If $c>0$, then the left-hand side defines a symmetric bilinear form that induces an inner product that is equivalent to the canonic inner product in $\HcurlsetZero\Omega$.
  We may thus assume that $c<0$ is a constant and not an eigenvalue of the cavity problem.
  The case $\ff_2=0$ is found in~\cite[Corollary~4.19]{Monk03}. Note that~\cite[Corollary~4.19]{Monk03} also implies that $\ker(\LL)$ is trivial. 
  By linearity we may thus assume that $\ff_1=0$ and $\ff_2\neq 0$. 
  Consider the Helmholtz decomposition
  \begin{align*}
    \yy = \yy_0 + \nabla y_0 \text{ where } y_0 \in H_0^1(\Omega) \text{ solves } \Delta y_0 = \div\yy.
  \end{align*}
  By construction, $\div \yy_0 = \div\yy-\Delta y_0 = 0$, and $\nabla y_0\times \normal|_{\partial \Omega} = 0$ since $y_0|_{\partial \Omega} = 0$. 
  Then, $\yy_0 = \yy-\nabla y_0 \in \widetilde\HH = \set{\vv\in\HcurlsetZero\Omega}{\div\vv=0}$. 
  Using $\zz = \nabla q$ for any $q\in H_0^1(\Omega)$ in~\eqref{eq:maxwellVar} one finds that $\div\yy=0$, thus, a solution to~\eqref{eq:maxwellVar} satisfies $\yy =\yy_0$. Consequently,~\eqref{eq:maxwellVar} is equivalent to: Find $\yy_0\in\widetilde\HH$ such that
  \begin{align*}
    \ip{\ccurl\yy_0}{\ccurl\zz_0}_{L^2(\Omega)} + c\ip{\yy_0}{\zz_0}_{L^2(\Omega)} = \ip{\ff_2}{\ccurl\zz_0}_{L^2(\Omega)}
  \end{align*}
  for all $\zz_0\in \widetilde\HH$. This variational formulation has a unique solution because the first term on the left-hand side defines a bounded and coercive bilinear form~\cite[Corollary~3.51]{Monk03} and the embedding from $\widetilde\HH$ to $L^2(\Omega,\R^3)$ is compact~\cite[Corollary~3.49]{Monk03}.
\end{proof}

The optimal control problem under consideration is: Given $\ff\in L^2(\Omega;\R^3)$ and $\zz_d\in L^2(\Omega;\R^3)$, solve the minimization problem
\begin{align*}
  \min \norm{\yy-\zz_d}{L^2(\Omega)}^2 + &\lambda \norm{\uu}{L^2(\Omega)}^2 \quad\text{subject to}\\
  \ccurl\ssigma + c\yy &= \ff + \uu, \\
  \ccurl\yy - \ssigma &= 0, \\
  \yy\times\normal|_{\partial \Omega} &= 0. 
\end{align*}
The remaining operators and spaces to fit the abstract framework are given by
\begin{itemize}
  \item $X = H= L^2(\Omega;\R^3)$, $\Xad = \set{\vv\in X}{\boldsymbol{a}\leq \vv\leq \boldsymbol{b} \text{ a.e. }\Omega}$, $\boldsymbol{a}<\boldsymbol{b}$.
  \item $\AA(\yy,\ssigma) = \yy$, $\BB\uu = -(\uu,0)$, $\CC\uu = \uu$.
\end{itemize}

\subsection{Heat equation}\label{sec:examples:heat}
Let $\Omega\subseteq \R^d$, $d\geq 1$ denote a bounded Lipschitz domain, $J =(0,T)$ a time interval, and $Q=J\times \Omega$ the time-space domain.
Given the data $f\in L^2(Q;\R)$, $y_0\in L^2(\Omega;\R)$ consider the heat equation
\begin{align*}
  \partial_t y - \Delta y &=f, \\
  y|_{J\times\partial\Omega} &= 0, \\
  y(0) &= y_0,
\end{align*}
and its first-order reformulation
\begin{align*}
  \partial_t y - \div\ssigma &=f, \\
  \nabla y - \ssigma &= 0, \\
  y|_{J\times\partial\Omega} &= 0, \\
  y(0) &= y_0.
\end{align*}

We analyze the following optimal control problem. Given $f\in L^2(Q;\R)$, $y_0\in L^2(\Omega;\R)$
and the desired states $z_d\in L^2(Q;\R)$, $z_{d,T}\in L^2(\Omega;\R)$, we consider
\begin{align}\label{eq:heat}
\begin{split}
  \min \norm{y-z_d}{L^2(Q)}^2 + \norm{y(T)-z_{d,T}}{L^2(\Omega)}^2 + &\lambda \norm{u}{L^2(Q)}^2 + \lambda \norm{u_0}{L^2(\Omega)}^2 \quad\text{subject to} \\
  \partial_t y -\div\ssigma &= f + u, \\
  \nabla y - \ssigma &= 0, \\
  y|_{J\times\partial \Omega} &= 0, \\
  y(0) &= y_0+u_0.
\end{split}
\end{align}

The spaces and operators that fit the abstract framework are: 
\begin{itemize}
  \item $X = H=L^2(Q;\R)\times L^2(\Omega;\R)$, 
  \item $\Xad = \set{(v,v_0)\in X}{a\leq v \leq b \text{ a.e. in }Q, a_0\leq v_0\leq b_0 \text{ a.e. in }\Omega}$
  \item $Y = \set{(y,\ssigma)\in L^2(H_0^1(\Omega))\times L^2(Q;\R^d)}{\partial_t y-\div\ssigma\in L^2(Q;\R)}$
  \item $Y^\star = \set{(y,\ssigma)\in L^2(H_0^1(\Omega))\times L^2(Q;\R^d)}{\partial_t y+\div\ssigma\in L^2(Q;\R)}$,
  \item $Z = L^2(Q;\R)\times L^2(Q;\R^d) \times L^2(\Omega;\R)$,
  \item $\II(y,\ssigma) = (y,\ssigma,y(T))$, $\II^\star(p,\xxi) = (p,\xxi,p(0))$,
  \item $\AA(y,\ssigma,z) = (y,z)$ for $(y,\ssigma,z)\in Z$,
  \item $\BB (u,u_0) = -(u,0,u_0)$, 
  \item $\CC (u,u_0) = (u,u_0)$.
\end{itemize}
From the analysis of~\cite[Section~2]{GS21} we note that $Y$ and $Y^\star$ are Hilbert spaces with norms
\begin{align*}
  \norm{(y,\ssigma)}{Y}^2 &= \norm{\nabla y}{L^2(Q)}^2 + \norm{\ssigma}{L^2(Q)}^2 + \norm{\partial_ty -\div\ssigma}{L^2(Q)}^2, \\
  \norm{(y,\ssigma)}{Y^\star}^2 &= \norm{\nabla y}{L^2(Q)}^2 + \norm{\ssigma}{L^2(Q)}^2 + \norm{\partial_ty +\div\ssigma}{L^2(Q)}^2.
\end{align*}
The least-squares operators are given by
\begin{align*}
  \LL\yy &= \left(\partial_t y -\div\ssigma, \nabla y -\ssigma, y(0)\right), \quad
  \LL^\star\pp = \left(-\partial_t p-\div\xxi, \nabla p -\xxi, p(T)\right). 
\end{align*}
In~\cite[Theorem~2.3 and Remark~2.6]{GS21} it is shown that $\LL\colon Y\to Z$ is an isomorphism. The same argumentation proves that $\LL^\star$ is an isomorphism.
To see~\eqref{ass:I} we employ integration by parts in space and in time to get for all $(y,\ssigma)\in Y$, $(p,\xxi)\in Y^\star$
\begin{align*}
  \ip{\LL(y,\ssigma)}{\II^\star(p,\xxi)}_Z &= \ip{\partial_t y -\div\ssigma}{p}_{L^2(Q)} + \ip{\nabla y -\ssigma}{\xxi}_{L^2(Q)} + \ip{y(0)}{p(0)}_{L^2(\Omega)}
  \\
  &= \ip{\ssigma}{\nabla p-\xxi}_{L^2(Q)} + \ip{y}{-\partial_t p-\div\xxi}_{L^2(Q)} + \ip{y(T)}{p(T)}_{L^2(\Omega)}
  \\
  &= \ip{\II(y,\ssigma)}{\LL^\star(p,\xxi)}_Z.
\end{align*}


\section{Numerical experiments}\label{sec:num}
In this section we present numerical results for some of the examples described in Section~\ref{sec:examples}. 
We consider optimal control problems subject to the Poisson equation (Subsection~\ref{sec:num:poisson:unconstrained} for an unconstrained problem and Subsections~\ref{sec:num:poisson:constrained},\ref{sec:num:lshape} for a constrained problem), the Stokes equations (Subsection~\ref{sec:num:stokes}) and the heat equation (Subsection~\ref{sec:num:heat}). 

All programs have been implemented in \textsc{MATLAB} version 2022a on a \textsc{Linux} machine with \textsc{AMD Ryzen 1700+} processor and 32GB RAM. 

\subsection{Discretization}
For all problems except the heat equation we consider a two-dimensional domain $\Omega$ with polygonal boundary. 
For the heat equation we consider the time interval $J=(0,T)$ and a one-dimensional spatial domain $\omega$ (i.e., interval), hence, 
the space-time domain $Q = J\times \omega$ is a two-dimensional domain with polygonal boundary. For simplicity we write $\Omega$ for spatial and space-time domains in the definitions below. 

We consider a regular triangulation $\TT$ of $\Omega$ into open, non-empty triangles, 
\begin{align*}
  \overline\Omega = \bigcup_{T\in\TT} \overline{T}.
\end{align*}
The maximum element diameter is denoted by $h$. 
The following finite element spaces are used:
\begin{align*}
  \PP^p(\TT) &:= \set{v\in L^2(\Omega)}{v|_T \text{ is a polynomial of degree }\leq p, \,\text{for all }T\in\TT}, \\
  \cS^1(\TT) &:= \PP^1(\TT)\cap H^1(\Omega), \quad \cS_0^1(\TT) := \PP^{1}(\TT)\cap H_0^1(\Omega), \\
  \RT^0(\TT) &:= \set{\ttau\in \Hdivset\Omega}{\ttau|_T(x,y) = (\alpha,\beta)^\top + \gamma(x,y)^\top, \, \alpha,\beta,\gamma\in\R, \,\text{for all }T\in\TT}.
\end{align*}

We consider box constraints, i.e., $\Xad = \set{v\in L^2(\Omega;\R)}{a\leq v \leq b \text{ a.e. in }\Omega}$ with $a<b$ and use the discrete space
\begin{align*}
  X_{h,\mathrm{ad}} := \Xad\cap \PP^0(\TT). 
\end{align*}
By equipping $\PP^0(\TT)$ with the canonical basis of characteristic functions, the constraints in $X_{h,\mathrm{ad}}$ are pointwise constraints and we use the active set strategy proposed in~\cite[Algorithm~A2]{KKT03} to solve the discretized variational inequalities. 

We employ a standard adaptive finite element loop consisting of the four basic steps: \textsc{Solve}, \textsc{Estimate}, \textsc{Mark}, and \textsc{Refine}. 
For the estimation step we use the a posteriori error estimators proposed in Section~\ref{sec:lsfem:constrained} for constrained problems and in Section~\ref{sec:lsfem:unconstrained} for unconstrained problems. Note that both estimators can be written as a sum of local contributions, i.e., 
\begin{align*}
  \xi^2 = \sum_{T\in\TT} \xi(T)^2,
\end{align*}
where $\xi$ denotes one of the proposed estimators. 
As marking strategy in the example from Section~\ref{sec:num:lshape} we use the bulk criterion: Find a (minimal) set $\mathcal{M}\subseteq \TT$ such that
\begin{align*}
  \frac14 \xi^2 \leq \sum_{T\in\mathcal{M}} \xi(T)^2. 
\end{align*}
The refinement step is realized employing the newest-vertex bisection algorithm. 
Uniform refinements correspond to the case where all elements in the mesh are marked for refinement.

Let us note that the bilinear form $a(\cdot;\cdot)$ is coercive provided that the scaling parameters $\alpha$, $\beta$ are chosen sufficiently large, see Theorem~\ref{thm:constrained}.
In the numerical experiments we have chosen $\alpha=1$, $\beta=\lambda^{-1}$ and found that the results do not significantly change for different choices of this parameter provided that $\alpha\eqsim 1$, $\beta\eqsim \lambda^{-1}$.

\subsection{Unconstrained problem subject to Poisson equation}\label{sec:num:poisson:unconstrained}
Let $\Omega = (0,1)^2$. We consider the optimal control problem
\begin{align}\label{eq:optCtr:poisson}
\begin{split}
  \min_{u\in L^2(\Omega)} \norm{y-z_d}{L^2(\Omega)}^2 + &\lambda \norm{u}{L^2(\Omega)}^2 \quad\text{subject to}\\
  -\Delta y &= u + f \quad\text{in }\Omega, \\
  y|_{\partial\Omega} &= 0. 
\end{split}
\end{align}
In view of the abstract framework we have that $X = \Xad = L^2(\Omega;\R)$ and thus an unconstrained optimal control problem.
For this problem we consider the manufactured solution
\begin{align}\label{eq:optCtr:poisson:sol}
\begin{split}
  y(x_1,x_2) &= \sin(\pi x_1)\sin(\pi x_2), \\
  p(x_1,x_2) &= x_1(1-x_1)x_2(1-x_2),
\end{split}
\end{align}
and $u$ resp. $f$, $z_d$ are then computed by
\begin{align*}
  u &= -\frac1\lambda p, \\
  f &= -\Delta y -u, \\
  z_d &= \Delta p + y. 
\end{align*}

Since this is an unconstrained control problem we use the method described in Section~\ref{sec:lsfem:unconstrained} with the lowest-order finite element space
\begin{align*}
  Y_h \times Y_h^\star = (\cS_0^1(\TT)\times \RT^0(\TT))\times(\cS_0^1(\TT)\times \RT^0(\TT)),
\end{align*}
i.e., we compute minimizer $(\yy_h,\pp_h)$ of the functional $G(\cdot;f,z_d)$ over the space $Y_h\times Y_h^\star$. Since the solution is smooth we expect that
\begin{align*}
  \norm{\yy-\yy_h}Y + \norm{\pp-\pp_h}{Y^\star} = \OO(h).
\end{align*}
This is observed in the left plot of Figure~\ref{fig:poisson} where we visualize the errors $\norm{\yy-\yy_h}Y$, $\norm{\pp-\pp_h}{Y}$ and the a posteriori error estimator $G(\yy_h,\pp_h;f,z_d)$ for $\lambda=10^{-2}$.
The bottom row of Figure~\ref{fig:poisson} shows the effectivity index
\begin{align*}
  \mathrm{eff_{uc}}:=\frac{\sqrt{G(\yy_h,\pp_h;f,z_d)}}{\norm{(\yy-\yy_h,\pp-\pp_h)}{Y\times Y^\star}}
\end{align*}
for different values of $\lambda$.
Here, we use uniform mesh refinement.

\begin{figure}
  \begin{center}
    \begin{tikzpicture}
  \begin{groupplot}[group style={group size= 2 by 1},width=0.5\textwidth,cycle list/Dark2-6,
                      cycle multiindex* list={
                          mark list*\nextlist
                          Dark2-6\nextlist},
                      every axis plot/.append style={ultra thick},
                      grid=major,
                      xlabel={degrees of freedom},
                    ymode=log,
                    xmode=log]
        \nextgroupplot[title=Unconstrained,
          legend entries={\small {$G(\yy,\pp;f,z_d)$},\small $\|\yy-\yy_h\|_Y$,\small $\|\pp-\pp_h\|_Y$},
                      legend pos=north east]
                \addplot table [x=dofLSQ,y=estLSQ] {data/ExamplePoissonUnconstrained.dat};
                \addplot table [x=dofLSQ,y=errState] {data/ExamplePoissonUnconstrained.dat};
                \addplot table [x=dofLSQ,y=errAdjoint] {data/ExamplePoissonUnconstrained.dat};
                \addplot [black,dotted,mark=none] table [x=dofLSQ,y expr={0.6*sqrt(\thisrowno{1})^(-1)}] {data/ExamplePoissonUnconstrained.dat};
        \nextgroupplot[title=Constrained,
          legend entries={\small {$\eta$},\small $\|\yy-\yy_h\|_Y$,\small $\|\pp-\pp_h\|_Y$,\small $\|u-u_h\|$},
                      legend pos=north east]
                \addplot table [x=dofLSQ,y=estLSQ] {data/ExamplePoissonConstrained.dat};
                \addplot table [x=dofLSQ,y=errState] {data/ExamplePoissonConstrained.dat};
                \addplot table [x=dofLSQ,y=errAdjoint] {data/ExamplePoissonConstrained.dat};
                \addplot table [x=dofLSQ,y=errU] {data/ExamplePoissonConstrained.dat};
                \addplot [black,dotted,mark=none] table [x=dofLSQ,y expr={0.6*sqrt(\thisrowno{1})^(-1)}] {data/ExamplePoissonConstrained.dat};
    \end{groupplot}
%
\end{tikzpicture}
    \begin{tikzpicture}
  \begin{groupplot}[group style={group size= 2 by 1},width=0.5\textwidth,cycle list/Dark2-6,
                      cycle multiindex* list={
                          mark list*\nextlist
                          Dark2-6\nextlist},
                      every axis plot/.append style={ultra thick},
                      grid=major,
                      xlabel={degrees of freedom},
                    xmode=log]
          \nextgroupplot[title=Unconstrained,ymin = 0, ymax=2,
          legend entries={\small $\lambda=10^{-1}$,\small $\lambda=10^{-2}$,\small $\lambda=10^{-3}$,\small $\lambda=10^{-4}$,\small $\lambda=10^{-5}$,\small $\lambda=10^{-6}$},
                      legend pos=north west]
                \addplot table [x=dofLSQ,y=effInd] {data/ExamplePoissonUnconstrained_1e-01.dat};
                \addplot table [x=dofLSQ,y=effInd] {data/ExamplePoissonUnconstrained_1e-02.dat};
                \addplot table [x=dofLSQ,y=effInd] {data/ExamplePoissonUnconstrained_1e-03.dat};
                \addplot table [x=dofLSQ,y=effInd] {data/ExamplePoissonUnconstrained_1e-04.dat};
                \addplot table [x=dofLSQ,y=effInd] {data/ExamplePoissonUnconstrained_1e-05.dat};
                \addplot table [x=dofLSQ,y=effInd] {data/ExamplePoissonUnconstrained_1e-06.dat};
        \nextgroupplot[title=Constrained,ymin = 0, ymax=8,
                legend entries={\small $\lambda=10^{-1}$,\small $\lambda=10^{-2}$,\small $\lambda=10^{-3}$,\small $\lambda=10^{-4}$,\small $\lambda=10^{-5}$,\small $\lambda=10^{-6}$},
                      legend pos=north east]
                \addplot table [x=dofLSQ,y=effInd] {data/ExamplePoissonConstrained_1e-01.dat};
                \addplot table [x=dofLSQ,y=effInd] {data/ExamplePoissonConstrained_1e-02.dat};
                \addplot table [x=dofLSQ,y=effInd] {data/ExamplePoissonConstrained_1e-03.dat};
                \addplot table [x=dofLSQ,y=effInd] {data/ExamplePoissonConstrained_1e-04.dat};
                \addplot table [x=dofLSQ,y=effInd] {data/ExamplePoissonConstrained_1e-05.dat};
                \addplot table [x=dofLSQ,y=effInd] {data/ExamplePoissonConstrained_1e-06.dat};
    \end{groupplot}
\end{tikzpicture}
  \end{center}
  \caption{Top row: Errors and error estimator for the numerical experiments from Section~\ref{sec:num:poisson:unconstrained} (left plot) resp. Section~\ref{sec:num:poisson:constrained} (right plot) with $\lambda = 10^{-2}$. 
  The black dashed lines indicate the convergence rate $\OO(h)$.
  Bottom row: Effectivity indices for different values of $\lambda$.}
  \label{fig:poisson}
\end{figure}
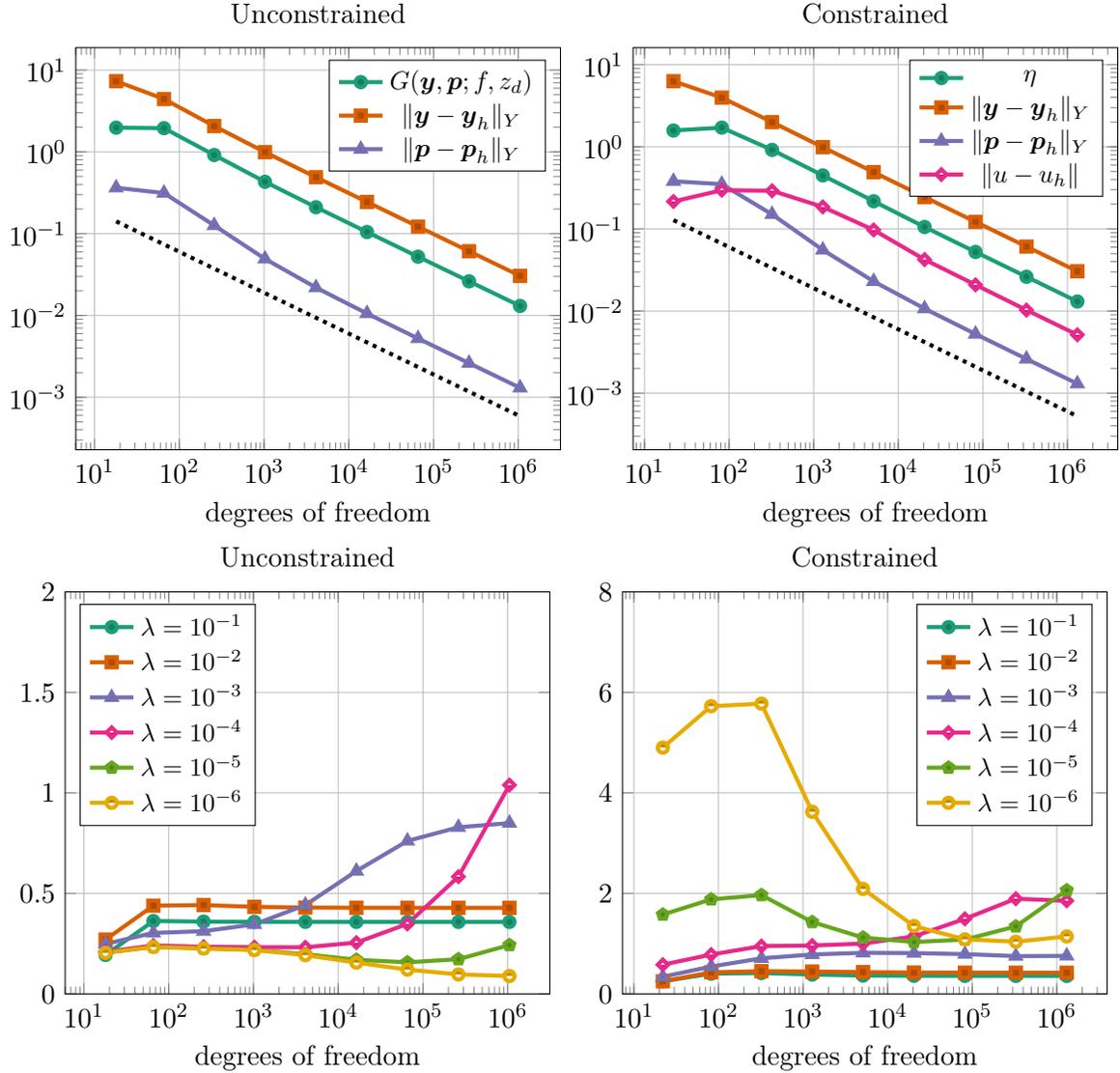

\subsection{Constrained problem subject to Poisson equation}\label{sec:num:poisson:constrained}
In this section we consider the control problem~\eqref{eq:optCtr:poisson} and the same setting again but replace the control space $\Xad=L^2(\Omega;\R)$ by 
\begin{align*}
  \Xad = \set{v\in L^2(\Omega;\R)}{-1\leq v \leq 0 \text{ a.e. in }\Omega}.
\end{align*}
We prescribe the solutions~\eqref{eq:optCtr:poisson:sol} and define
\begin{align*}
  u = \Pi_{\Xad} (-\lambda^{-1} p),
\end{align*}
and compute the data $f$, $z_d$ by
\begin{align*}
  f &= -\Delta y -u, \\
  z_d &= \Delta p + y. 
\end{align*}
We use the proposed method described in Section~\ref{sec:lsfem:constrained}, i.e., we solve the variational inequality~\eqref{eq:varineq:general} with
the lowest-order finite element space
\begin{align*}
  H = \Xad\cap \PP^0(\TT)\times (\cS_0^1(\TT) \times \RT^0(\TT)) \times (\cS_0^1(\TT) \times \RT^0(\TT)).
\end{align*}
The solution pair $(y,p)$ is smooth and $u \in H^1(\Omega)$. Let $\Pi_h^0$ denote the $L^2(\Omega)$ orthogonal projection onto piecewise constants. 
Using Corollary~\ref{cor:apriori} we infer that
\begin{align*}
  &\norm{u-u_h}{L^2(\Omega)}^2 + \norm{\yy-\yy_h}{Y}^2 + \norm{\pp-\pp_h}{Y}^2 
  \\
  &\qquad \lesssim \norm{u-\Pi_h^0 u}{L^2(\Omega)}^2 + \norm{\yy-\zz_h}Y^2 + \norm{\pp-\qq_h}Y^2 + |\ip{p+\lambda u}{\Pi_h^0u-u}_{L^2(\Omega)}|
\end{align*}
for $(\zz_h,\qq_h)\in Y_h\times Y_h$, which is possible since $\Pi_h^0u \in[a,b]$ if $a\leq u \leq b$ a.e., thus, $\Pi_h^0 u\in X_{h,\mathrm{ad}}$.
Employing $\norm{(1-\Pi_h^0)u}{L^2(\Omega)} \lesssim h\norm{\nabla u}{L^2(\Omega)}$ we infer that 
\begin{align*}
  &\norm{(1-\Pi_h^0)u}{L^2(\Omega)}^2 + |\ip{p+\lambda u}{(1-\Pi_h^0)u}_{L^2(\Omega)}| 
  \\
  &\qquad= \norm{(1-\Pi_h^0)u}{L^2(\Omega)}^2 + |\ip{(1-\Pi_h^0)(p+\lambda u)}{(1-\Pi_h^0)u}_{L^2(\Omega)}|
  \\ &\qquad\lesssim h^2\norm{\nabla u}{L^2(\Omega)}^2 + h\norm{\nabla(p+\lambda u)}{L^2(\Omega)}h \norm{\nabla u}{L^2(\Omega)} = \OO(h^2).
\end{align*}
Putting all together we obtain with standard approximation results that
\begin{align*}
  \norm{u-u_h}{L^2(\Omega)} + \norm{\yy-\yy_h}{Y} + \norm{\pp-\pp_h}{Y} = \OO(h).
\end{align*}
This rate is also observed in our numerical experiment for the moderate value $\lambda=10^{-2}$, see the top row of Figure~\ref{fig:poisson} (right plot).
In the bottom row we also show the effectivity index
\begin{align*}
  \mathrm{eff_c}:=\frac{\eta}{\norm{(\uu-\uu_h,\yy-\yy_h,\pp-\pp_h)}{X\times Y\times Y^\star}}
\end{align*}
for different values of $\lambda$.
Here, we use uniform mesh refinement.

\subsection{Constrained problem with singular solution}\label{sec:num:lshape}
We consider the control problem~\eqref{eq:optCtr:poisson} with $\lambda=1$ and the L-shaped domain $\Omega = (-1,1)^2\setminus [-1,0]^2$.
The set of admissible controls is given by
\begin{align*}
  \Xad = \set{v\in L^2(\Omega)}{0.1\leq v \leq 0.12 \text{ a.e. in }\Omega}
\end{align*}
and as data we use
\begin{align*}
  f(x_1,x_2) &= 0, \, z_d(x_1,x_2) = 1, \quad (x_1,x_2)\in \Omega.
\end{align*}
For this setup we do not know an explicit representation of the solution $(u,y,p)$ but stress that reduced regularity is expected due to the non-convexity of the domain.
Figure~\ref{fig:lshape} shows that the error estimator $\eta$ seems to asymptotically converge at $\OO(h^{2/3})$ for a sequence of uniform meshes. 
Using adaptively refined meshes we observe that optimal rates are reestablished, i.e., the error estimator converges at $\OO(N^{-1/2})$ with $N=\dim(X_h\times Y_h\times Y_h)$. 
The right plot in top row of Figure~\ref{fig:lshape} visualizes the solution $u_h \in X_{h,\mathrm{ad}}$ on a mesh with $\#\TT=720$ triangles obtained from the adaptive loop. 
Strong mesh refinements towards the re-entrant corner are observed (Figure~\ref{fig:lshape} bottom row), indicating that the adaptive algorithm detects singularities.

\begin{figure}
  \begin{center}
    \begin{tikzpicture}
\begin{loglogaxis}[
    width=0.49\textwidth,
cycle list/Dark2-6,
cycle multiindex* list={
mark list*\nextlist
Dark2-6\nextlist},
every axis plot/.append style={ultra thick},
xlabel={degrees of freedom},
grid=major,
legend entries={\small $\eta$ unif.,\small $\eta$ adap.},
legend pos=south west,
]

\addplot table [x=dofLSQ,y=estLSQ] {data/ExampleLshapeUnif.dat};
\addplot table [x=dofLSQ,y=estLSQ] {data/ExampleLshapeAdap.dat};
\addplot [black,dotted,mark=none] table [x=dofLSQ,y expr={1*sqrt(\thisrowno{1})^(-2/3)}] {data/ExampleLshapeUnif.dat};
\addplot [black,dotted,mark=none] table [x=dofLSQ,y expr={3*sqrt(\thisrowno{1})^(-1)}] {data/ExampleLshapeAdap.dat};
\end{loglogaxis}
\end{tikzpicture}
\begin{tikzpicture}
  \begin{axis}[
width=0.49\textwidth,
view={-45}{60},
xlabel={$x$},
ylabel={$y$},
hide axis,
    colorbar,
]
\addplot3[patch,hide axis,line width=0.2pt,faceted color=black] table{data/LshapeSol.dat};
\end{axis}
\end{tikzpicture}
\begin{tikzpicture}
\begin{axis}[hide axis,
width=0.5\textwidth,
    axis equal,
]

\addplot[patch,color=white,
faceted color = black, line width = 0.5pt,
patch table ={data/LshapeEle2.dat}] file{data/LshapeCoo2.dat};
\end{axis}
\end{tikzpicture}
\begin{tikzpicture}
\begin{axis}[hide axis,
width=0.5\textwidth,
    axis equal,
]

\addplot[patch,color=white,
faceted color = black, line width = 0.5pt,
patch table ={data/LshapeEle.dat}] file{data/LshapeCoo.dat};
\end{axis}
\end{tikzpicture}
  \end{center}
  \caption{Top row: The left plot shows the error estimator $\eta$ on a sequence of uniformly refined meshes and a sequence of locally refined meshes generated by the adaptive algorithm. The black dashed lines indicate $\OO(N^{-1/3})$ resp. $\OO(N^{-1/2}$) where $N=\dim(X_{h,\mathrm{ad}}\times Y_h\times Y_h)$. 
    The right plot shows the discrete control $u_h$ on a mesh with $\#\TT =720$ triangles.
Bottom row: Two meshes generated by the adaptive algorithm with $\#\TT=513$ resp. $\#\TT=720$ elements.}\label{fig:lshape}
\end{figure}
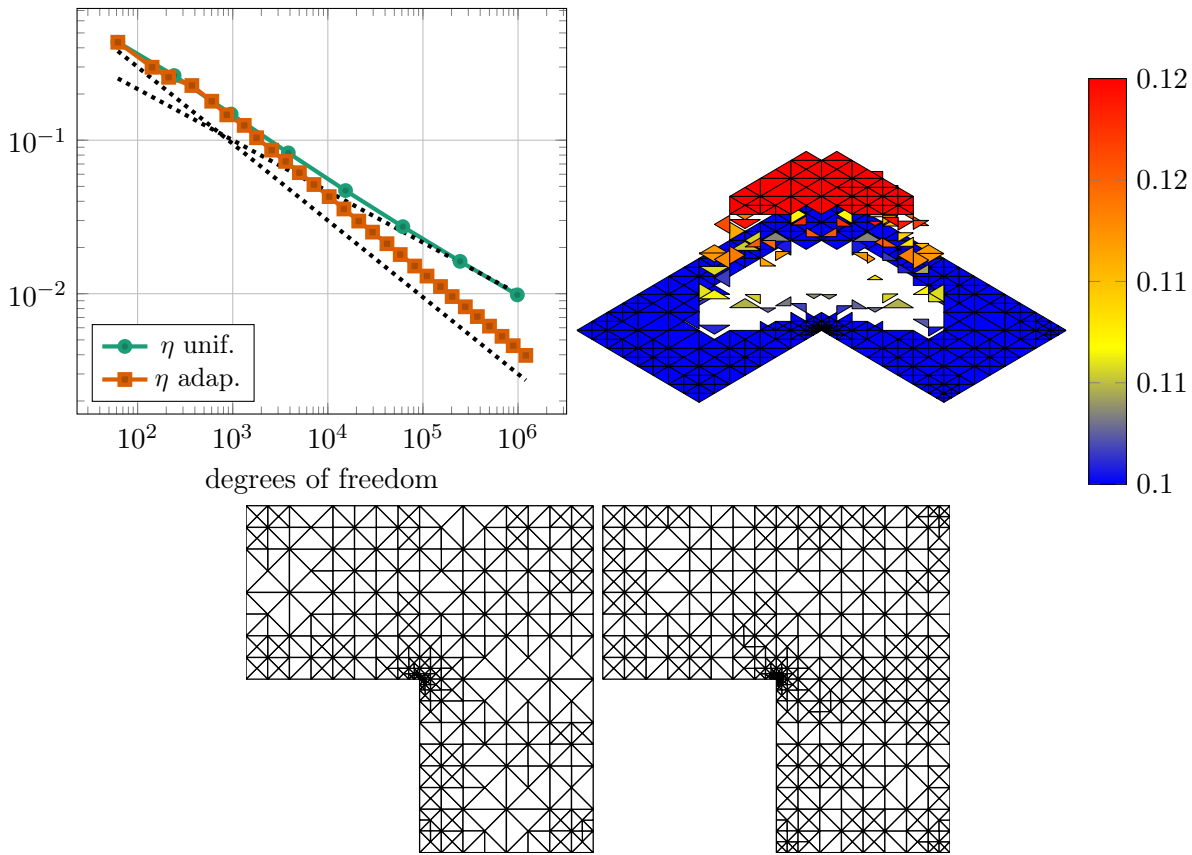

\subsection{Unconstrained problem subject to Stokes equation}\label{sec:num:stokes}
Let $\Omega =(0,1)^2$. We consider the optimal control problem~\eqref{eq:stokes} with $X = \Xad$, i.e., the unconstrained control problem. Our manufactured solution is given by
\begin{align*}
  \yy(x,y) = \pp(x,y) = \curl \big( x(1-x)y(1-y)\big)^2
\end{align*}
and the data $\ff = -\boldsymbol\Delta\yy+\lambda^{-1}\pp$, $\zz_d = \boldsymbol\Delta\pp+\yy$ is computed thereof. 
Since this is an unconstrained control problem we employ the method described in Section~\ref{sec:lsfem:unconstrained}, see also Section~\ref{sec:num:poisson:unconstrained} for a similar numerical experiment for the Poisson problem. 
The solution is smooth so that we expect optimal convergence of the lowest-order scheme which is also observed in Figure~\ref{fig:stokes} for different values of $\lambda$. Here, we use uniform mesh refinement.
Figure~\ref{fig:stokes} also shows the effectivity index $\mathrm{eff_{uc}}$.
One observes that as $\lambda$ gets smaller the error in $\yy$ has a pre-asymptotic range where it converges at a higher rate.
This explains that for $\lambda=10^{-3}$ the effectivity index grows.

\begin{figure}
  \begin{center}
    \begin{tikzpicture}
\begin{loglogaxis}[
    title={\small error and estimator $\lambda=10^{-1}$},
    width=0.49\textwidth,
cycle list/Dark2-6,
cycle multiindex* list={
mark list*\nextlist
Dark2-6\nextlist},
every axis plot/.append style={ultra thick},
xlabel={degrees of freedom},
grid=major,
legend entries={\small {$G(\yy_h,\pp_h;\ff,\zz_d)$},\small $\|\yy-\yy_h\|_Y$,\small $\|\pp-\pp_h\|_Y$},
legend pos=north east
]

\addplot table [x=dofLSQ,y=estLSQ] {data/ExampleStokesUnconstrained_1e-01.dat};
\addplot table [x=dofLSQ,y=errState] {data/ExampleStokesUnconstrained_1e-01.dat};
\addplot table [x=dofLSQ,y=errAdjoint] {data/ExampleStokesUnconstrained_1e-01.dat};
\addplot [black,dotted,mark=none] table [x=dofLSQ,y expr={3*sqrt(\thisrowno{1})^(-1)}] {data/ExampleStokesUnconstrained_1e-01.dat};
\end{loglogaxis}
\end{tikzpicture}
\begin{tikzpicture}
\begin{loglogaxis}[
    title={\small error and estimator $\lambda=10^{-2}$},
    width=0.49\textwidth,
cycle list/Dark2-6,
cycle multiindex* list={
mark list*\nextlist
Dark2-6\nextlist},
every axis plot/.append style={ultra thick},
xlabel={degrees of freedom},
grid=major,
legend entries={\small {$G(\yy_h,\pp_h;\ff,\zz_d)$},\small $\|\yy-\yy_h\|_Y$,\small $\|\pp-\pp_h\|_Y$},
legend pos=north east
]

\addplot table [x=dofLSQ,y=estLSQ] {data/ExampleStokesUnconstrained_1e-02.dat};
\addplot table [x=dofLSQ,y=errState] {data/ExampleStokesUnconstrained_1e-02.dat};
\addplot table [x=dofLSQ,y=errAdjoint] {data/ExampleStokesUnconstrained_1e-02.dat};
\addplot [black,dotted,mark=none] table [x=dofLSQ,y expr={3*sqrt(\thisrowno{1})^(-1)}] {data/ExampleStokesUnconstrained_1e-02.dat};
\end{loglogaxis}
\end{tikzpicture}
\begin{tikzpicture}
\begin{loglogaxis}[
    title={\small error and estimator $\lambda=10^{-3}$},
    width=0.49\textwidth,
cycle list/Dark2-6,
cycle multiindex* list={
mark list*\nextlist
Dark2-6\nextlist},
every axis plot/.append style={ultra thick},
xlabel={degrees of freedom},
grid=major,
legend entries={\small {$G(\yy_h,\pp_h;\ff,\zz_d)$},\small $\|\yy-\yy_h\|_Y$,\small $\|\pp-\pp_h\|_Y$},
legend pos=north east
]

\addplot table [x=dofLSQ,y=estLSQ] {data/ExampleStokesUnconstrained_1e-03.dat};
\addplot table [x=dofLSQ,y=errState] {data/ExampleStokesUnconstrained_1e-03.dat};
\addplot table [x=dofLSQ,y=errAdjoint] {data/ExampleStokesUnconstrained_1e-03.dat};
\addplot [black,dotted,mark=none] table [x=dofLSQ,y expr={3*sqrt(\thisrowno{1})^(-1)}] {data/ExampleStokesUnconstrained_1e-03.dat};
\end{loglogaxis}
\end{tikzpicture}
\begin{tikzpicture}
  \begin{axis}[xmode=log,ymin=0,ymax=5,
      title={\small effectivity index},
    width=0.49\textwidth,
cycle list/Dark2-6,
cycle multiindex* list={
mark list*\nextlist
Dark2-6\nextlist},
every axis plot/.append style={ultra thick},
xlabel={degrees of freedom},
grid=major,
legend entries={\small $\lambda=10^{-1}$,\small $\lambda=10^{-2}$,\small $\lambda=10^{-3}$},
legend pos=north west
]

\addplot table [x=dofLSQ,y=eff] {data/ExampleStokesUnconstrained_1e-01.dat};
\addplot table [x=dofLSQ,y=eff] {data/ExampleStokesUnconstrained_1e-02.dat};
\addplot table [x=dofLSQ,y=eff] {data/ExampleStokesUnconstrained_1e-03.dat};
\end{axis}
\end{tikzpicture}
  \end{center}
  \caption{Error estimator and errors for the optimal control problem~\eqref{eq:stokes} considered in Section~\ref{sec:num:stokes}. The dashed black line indicates $\OO(h)$. The right plot in the bottom row shows the effectivity index $\mathrm{eff_{uc}}$.}\label{fig:stokes}
\end{figure}

\subsection{Constrained problem subject to heat equation}\label{sec:num:heat}
Let $J = (0,1) = \Omega$, $Q=J\times \Omega$. 
We consider the optimal control problem~\eqref{eq:heat} with 
\begin{align*}
  \Xad = \set{(v,v_0)\in L^2(Q)\times L^2(\Omega)}{-1\leq v\leq 0, \, -1\leq v_0\leq 0}. 
\end{align*}
We use the manufactured solution
\begin{align*}
  y(x,t) &= t\sin(\pi x), \\
  p(x,t) &= (1-t)x(1-x). 
\end{align*}
Then, $(u,u_0) = \Pi_{\Xad}(-\lambda^{-1}p,-\lambda^{-1}p_0)$ and the data $f$, $z_d$, $z_{d,T}$ is computed by
\begin{align*}
  f &= \partial_t y - \partial_{xx} y-u, \\
  z_d &= \partial_t p + \partial_{xx} p+y, \\
  z_{d,T} &= -p(T)+y(T).
\end{align*}
Since the solution $(y,p)$ is smooth and $u\in H^1(Q)$, $u_0\in H^1(\Omega)$ one expects that
\begin{align*}
  \norm{u-u_h}{L^2(Q)}+\norm{u_0-u_h(0)}{L^2(\Omega)} + \norm{\yy-\yy_h}Y + \norm{\pp-\pp_h}{Y^\star}=\OO(h),
\end{align*}
see Theorem~\ref{thm:apriori} by using a similar argumentation as in Section~\ref{sec:num:poisson:constrained}. 
Figure~\ref{fig:heat:smooth} shows the a posteriori estimators and errors for this problem for different values of $\lambda$.
We observe the optimal rate $\OO(h) = \OO(N^{-1/2})$ where $N = \dim(X_{h}\times Y_h\times Y_h^\star)$ on a sequence of uniformly refined meshes.
Furthermore, Figure~\ref{fig:heat:smooth} also shows the effectivity index $\mathrm{eff_c}$.
We note that for $\lambda\leq 10^{-3}$ the employed solver (active set strategy) did not converge within reasonable time. This requires some further analysis and possible fine-tuning of parameters.

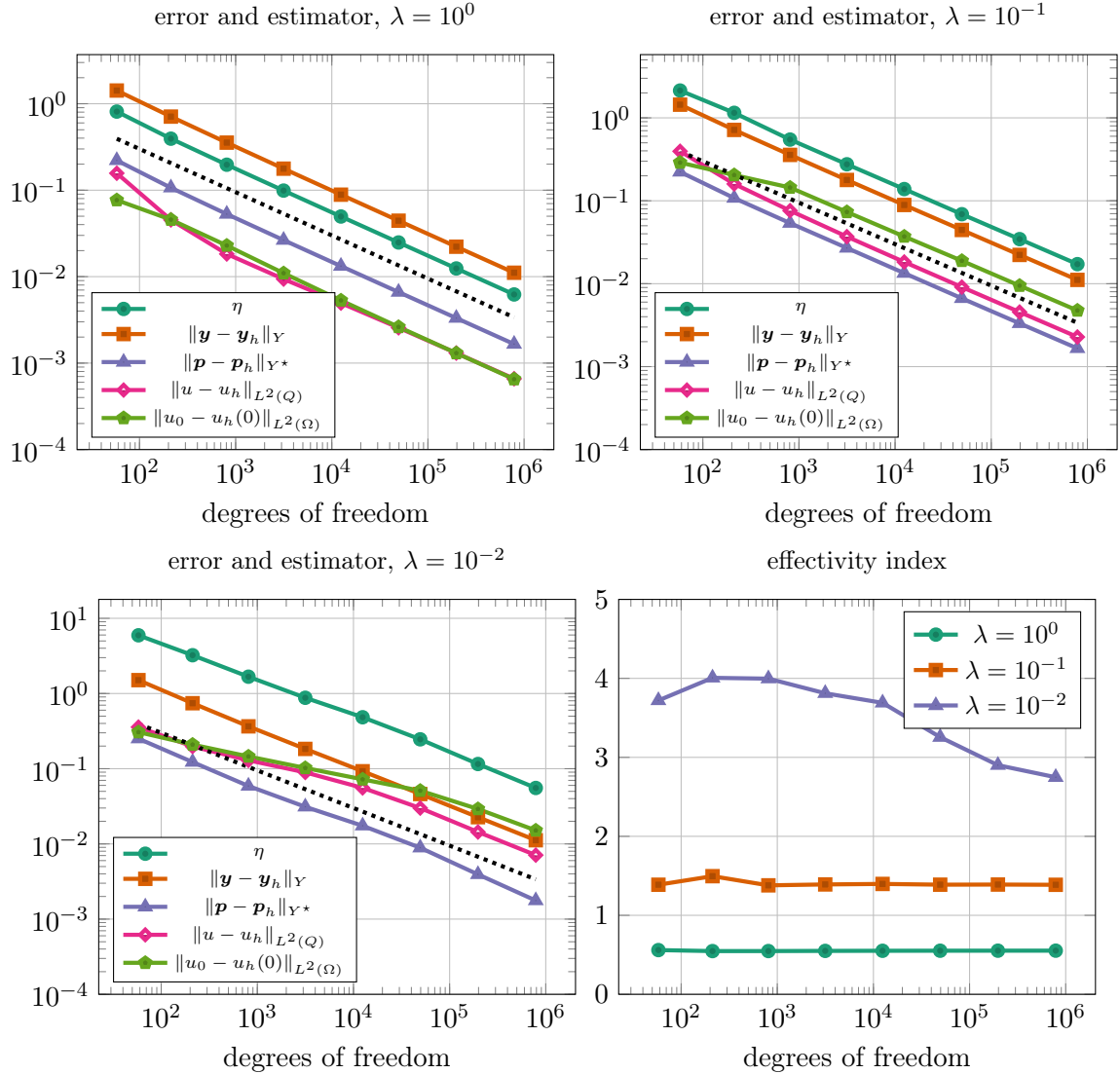
\begin{figure}
  \begin{center}
    \begin{tikzpicture}
\begin{loglogaxis}[
    title={\small error and estimator, $\lambda = 10^{0}$},
    width=0.49\textwidth,
cycle list/Dark2-6,
cycle multiindex* list={
mark list*\nextlist
Dark2-6\nextlist},
every axis plot/.append style={ultra thick},
xlabel={degrees of freedom},
grid=major,
legend entries={\tiny $\eta$,\tiny $\|\yy-\yy_h\|_Y$,\tiny $\|\pp-\pp_h\|_{Y^\star}$,\tiny $\|u-u_h\|_{L^2(Q)}$,\tiny $\|u_0-u_h(0)\|_{L^2(\Omega)}$},
legend pos=south west,
ymin=10^(-4),
]

\addplot table [x=dofLSQ,y=estLSQ] {data/ExampleHeatSmooth_ell1e+00.dat};
\addplot table [x=dofLSQ,y=errState] {data/ExampleHeatSmooth_ell1e+00.dat};
\addplot table [x=dofLSQ,y=errAdjoint] {data/ExampleHeatSmooth_ell1e+00.dat};
\addplot table [x=dofLSQ,y=errU] {data/ExampleHeatSmooth_ell1e+00.dat};
\addplot table [x=dofLSQ,y=errU0] {data/ExampleHeatSmooth_ell1e+00.dat};
\addplot [black,dotted,mark=none] table [x=dofLSQ,y expr={3*sqrt(\thisrowno{1})^(-1)}] {data/ExampleHeatSmooth_ell1e+00.dat};
\end{loglogaxis}
\end{tikzpicture}
\begin{tikzpicture}
\begin{loglogaxis}[
    title={\small error and estimator, $\lambda = 10^{-1}$},
    width=0.49\textwidth,
cycle list/Dark2-6,
cycle multiindex* list={
mark list*\nextlist
Dark2-6\nextlist},
every axis plot/.append style={ultra thick},
xlabel={degrees of freedom},
grid=major,
legend entries={\tiny $\eta$,\tiny $\|\yy-\yy_h\|_Y$,\tiny $\|\pp-\pp_h\|_{Y^\star}$,\tiny $\|u-u_h\|_{L^2(Q)}$,\tiny $\|u_0-u_h(0)\|_{L^2(\Omega)}$},
legend pos=south west,
ymin=10^(-4),
]

\addplot table [x=dofLSQ,y=estLSQ] {data/ExampleHeatSmooth_ell1e-01.dat};
\addplot table [x=dofLSQ,y=errState] {data/ExampleHeatSmooth_ell1e-01.dat};
\addplot table [x=dofLSQ,y=errAdjoint] {data/ExampleHeatSmooth_ell1e-01.dat};
\addplot table [x=dofLSQ,y=errU] {data/ExampleHeatSmooth_ell1e-01.dat};
\addplot table [x=dofLSQ,y=errU0] {data/ExampleHeatSmooth_ell1e-01.dat};
\addplot [black,dotted,mark=none] table [x=dofLSQ,y expr={3*sqrt(\thisrowno{1})^(-1)}] {data/ExampleHeatSmooth_ell1e-01.dat};
\end{loglogaxis}
\end{tikzpicture}
\begin{tikzpicture}
\begin{loglogaxis}[
    title={\small error and estimator, $\lambda = 10^{-2}$},
    width=0.49\textwidth,
cycle list/Dark2-6,
cycle multiindex* list={
mark list*\nextlist
Dark2-6\nextlist},
every axis plot/.append style={ultra thick},
xlabel={degrees of freedom},
grid=major,
legend entries={\tiny $\eta$,\tiny $\|\yy-\yy_h\|_Y$,\tiny $\|\pp-\pp_h\|_{Y^\star}$,\tiny $\|u-u_h\|_{L^2(Q)}$,\tiny $\|u_0-u_h(0)\|_{L^2(\Omega)}$},
legend pos=south west,
ymin=10^(-4),
]

\addplot table [x=dofLSQ,y=estLSQ] {data/ExampleHeatSmooth_ell1e-02.dat};
\addplot table [x=dofLSQ,y=errState] {data/ExampleHeatSmooth_ell1e-02.dat};
\addplot table [x=dofLSQ,y=errAdjoint] {data/ExampleHeatSmooth_ell1e-02.dat};
\addplot table [x=dofLSQ,y=errU] {data/ExampleHeatSmooth_ell1e-02.dat};
\addplot table [x=dofLSQ,y=errU0] {data/ExampleHeatSmooth_ell1e-02.dat};
\addplot [black,dotted,mark=none] table [x=dofLSQ,y expr={3*sqrt(\thisrowno{1})^(-1)}] {data/ExampleHeatSmooth_ell1e-02.dat};
\end{loglogaxis}
\end{tikzpicture}
\begin{tikzpicture}
  \begin{axis}[xmode=log,ymin=0,ymax=5,
      title={\small effectivity index},
    width=0.49\textwidth,
cycle list/Dark2-6,
cycle multiindex* list={
mark list*\nextlist
Dark2-6\nextlist},
every axis plot/.append style={ultra thick},
xlabel={degrees of freedom},
grid=major,
legend entries={\small $\lambda=10^{0}$,\small $\lambda=10^{-1}$,\small $\lambda=10^{-2}$},
legend pos=north east
]

\addplot table [x=dofLSQ,y=eff] {data/ExampleHeatSmooth_ell1e+00.dat};
\addplot table [x=dofLSQ,y=eff] {data/ExampleHeatSmooth_ell1e-01.dat};
\addplot table [x=dofLSQ,y=eff] {data/ExampleHeatSmooth_ell1e-02.dat};
\end{axis}
\end{tikzpicture}
  \end{center}
  \caption{Error estimator and errors for the optimal control problem~\eqref{eq:heat} considered in Section~\ref{sec:num:heat}. The dashed black line indicates $\OO(h)$. The right plot in the bottom row shows the effectivity index $\mathrm{eff_{c}}$.}\label{fig:heat:smooth}
\end{figure}


\bibliographystyle{abbrv}
\bibliography{literature}

\end{document}